\documentclass[11pt]{amsart}
\usepackage{amssymb}
\usepackage{amsmath}
\usepackage{amsfonts}
\usepackage[active]{srcltx}

\setcounter{MaxMatrixCols}{10}

\newtheorem{theorem}{Theorem}

\newtheorem{definition}{Definition}
\newtheorem{corollary}{Corollary}
\newtheorem*{Dr}{Theorem D}
\newtheorem*{GS}{Theorem GS1}
\newtheorem*{GS2}{Theorem GS2}

\begin{document}
\author{Ushangi Goginava }
\address{U. Goginava, Department of Mathematics, Faculty of Exact and
Natural Sciences, Tbilisi State University, Chavcha\-vadze str. 1, Tbilisi
0128, Georgia}
\email{zazagoginava@gmail.com}
\title[Convergence of Fourier-Legendre series]{Uniform Convergence of Double
Fourier-Legendre series of Functions of Bounded Generalized Variation }
\date{}
\maketitle

\begin{abstract}
The Uniform convergence of double Fourier-Legendre series of function of
bounded Harmonic variation and bounded partial $\Lambda $-variation are
investigated.
\end{abstract}

\footnotetext{%
2010 Mathematics Subject Classification 42C10 .
\par
Key words and phrases: Fourier-Legendre series, Generalized bounded
variation, Uniform convergence}

\section{Classes of Functions of Bounded Generalized Variation}

In 1881 Jordan \cite{Jo} introduced a class of functions of bounded
variation and applied it to the theory of Fourier series. Hereinafter this
notion was generalized by many authors (quadratic variation, $\Phi $%
-variation, $\Lambda $-variation ets., see \cite{Ch, M, Wi,W}). In two
dimensional case the class BV of functions of bounded variation was
introduced by Hardy \cite{Ha}.

Let $f$ be a real function of two variable. Given intervals $\Delta =(a,b)$,
$J=(c,d)$ and points $x,y$ from $I:=[-1,1]$ we denote
\begin{equation*}
f(\Delta ,y):=f(b,y)-f(a,y),\qquad f(x,J)=f(x,d)-f(x,c)
\end{equation*}%
and
\begin{equation*}
f(\Delta ,J):=f(a,c)-f(a,d)-f(b,c)+f(b,d).
\end{equation*}%
Let $E=\{\Delta _{i}\}$ be a collection of nonoverlapping intervals from $I$
ordered in arbitrary way and let $\Omega $ be the set of all such
collections $E$. Denote by $\Omega _{n}$ set of all collections of $n$
nonoverlapping intervals $\Delta _{k}\subset I.$

For the sequence of positive numbers $\Lambda =\{\lambda
_{n}\}_{n=1}^{\infty }$ and $I^{2}:=[-1,1]^{2}$ we denote
\begin{equation*}
\Lambda V_{1}(f;I^{2})=\sup_{y}\sup_{E\in \Omega }\sum_{i}\frac{|f(\Delta
_{i},y)|}{\lambda _{i}}\,\,\,\,\,\,\left( E=\{\Delta _{i}\}\right) ,
\end{equation*}%
\begin{equation*}
\Lambda V_{2}(f;I^{2})=\sup_{x}\sup_{F\in \Omega }\sum_{j}\frac{|f(x,J_{j})|%
}{\lambda _{j}}\qquad (F=\{J_{j}\}),
\end{equation*}%
\begin{equation*}
\Lambda V_{1,2}(f;I^{2})=\sup_{F,\,E\in \Omega }\sum_{i}\sum_{j}\frac{%
|f(\Delta _{i},J_{j})|}{\lambda _{i}\lambda _{j}}.
\end{equation*}

\begin{definition}
We say that the function $f$ has Bounded $\Lambda $-variation on $I^{2}$ and
write $f\in \Lambda BV$, if
\begin{equation*}
\Lambda V(f;I^{2}):=\Lambda V_{1}(f;I^{2})+\Lambda V_{2}(f;I^{2})+\Lambda
V_{1,2}(f;I^{2})<\infty .
\end{equation*}%
We say that the function $f$ has Bounded Partial $\Lambda $-variation and
write $f\in P\Lambda BV$ if
\begin{equation*}
P\Lambda V(f;I^{2}):=\Lambda V_{1}(f;I^{2})+\Lambda V_{2}(f;I^{2})<\infty .
\end{equation*}
\end{definition}

\begin{definition}
We say that the function $f\,$\ is continuous in $\left( \Lambda
^{1},\Lambda ^{2}\right) $-variation on $I^{2}$ and write $f\in C\left(
\Lambda ^{1},\Lambda ^{2}\right) V\left( I^{2}\right) $, if%
\begin{equation*}
\lim\limits_{n\rightarrow \infty }\Lambda _{n}^{1}V_{1}\left( f;I^{2}\right)
=\lim\limits_{n\rightarrow \infty }\Lambda _{n}^{2}V_{2}\left(
f;I^{2}\right) =0
\end{equation*}%
and%
\begin{equation*}
\lim\limits_{n\rightarrow \infty }\left( \Lambda _{n}^{1},\Lambda
^{2}\right) V_{1,2}\left( f;I^{2}\right) =\lim\limits_{n\rightarrow \infty
}\left( \Lambda ^{1},\Lambda _{n}^{2}\right) V_{1,2}\left( f;I^{2}\right) =0,
\end{equation*}%
where $\Lambda _{n}^{i}:=\left\{ \lambda _{k}^{i}\right\} _{k=n}^{\infty
}=\left\{ \lambda _{k+n}^{i}\right\} _{k=0}^{\infty },i=1,2.$
\end{definition}

If $\lambda _{n}\equiv 1$ (or if $0<c<\lambda _{n}<C<\infty ,\ n=1,2,\ldots $%
) the classes $\Lambda BV$ and $P\Lambda BV$ coincide with the Hardy class $%
BV$ and PBV respectively. Hence it is reasonable to assume that $\lambda
_{n}\rightarrow \infty $ and since the intervals in $E=\{\Delta _{i}\}$ are
ordered arbitrarily, we will suppose, without loss of generality, that the
sequence $\{\lambda _{n}\}$ is increasing. Thus,
\begin{equation}
1<\lambda _{1}\leq \lambda _{2}\leq \ldots ,\qquad \lim_{n\rightarrow \infty
}\lambda _{n}=\infty .  \label{Lambda}
\end{equation}

In the case when $\lambda _{n}=n,\ n=1,2\ldots $ we say \textit{Harmonic
Variation} instead of $\Lambda $-variation and write $H$ instead of $\Lambda$
($HBV$, $PHBV$, $HV(f)$, ets).

The notion of $\Lambda $-variation was introduced by Waterman \cite{W} in
one dimensional case, by Sahakian \cite{Saha} in two dimensional case. The
notion of bounded partial variation ($PBV$) was introduced by Goginava \cite%
{GoEJA} and the notion of bounded partial $\Lambda $-variation, by Goginava
and Sahakian \cite{GogSaha}.

The statements of the following theorem is known.

\begin{Dr}[Dragoshanski \protect\cite{Drag}]
$HBV=CHV$.
\end{Dr}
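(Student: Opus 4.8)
The plan is to prove the two inclusions $CHV\subseteq HBV$ and $HBV\subseteq CHV$ separately, the first being elementary and the second carrying the whole difficulty. For $CHV\subseteq HBV$ I would use the pointwise comparison of weights $\tfrac1i\le\tfrac{n}{\,n+i-1\,}$, valid for every $i\ge 1$, which for the harmonic sequence gives $HV_1(f)\le n\,\Lambda_n^1V_1(f)$ and likewise $HV_2(f)\le n\,\Lambda_n^2V_2(f)$ and $HV_{1,2}(f)\le n\,(\Lambda_n^1,\Lambda^2)V_{1,2}(f)$. Hence if the four tail quantities defining $CHV$ tend to $0$ they are in particular finite for some large $n$, and the displayed inequalities force $HV(f)=HV_1+HV_2+HV_{1,2}<\infty$, that is $f\in HBV$.

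The substance is the reverse inclusion. Since $\Lambda_n^1V_1(f)$ is nonincreasing in $n$ (the weights $1/(n+i-1)$ decrease), it suffices to prove that the limit is $0$, and I would reduce matters to the one–dimensional statement, applied uniformly in the frozen variable: if $g$ has $HV(g)<\infty$ then $\sup_{E}\sum_i|g(\Delta_i)|/(n+i-1)\to 0$. Introduce Chanturia's modulus of variation $v(m):=\sup_{E\in\Omega_m}\sum_{\Delta\in E}|g(\Delta)|$ (see \cite{Ch}). Ordering any collection so that $|g(\Delta_i)|$ is decreasing, which is optimal since the weights increase, and writing $A_i=\sum_{k\le i}|g(\Delta_k)|\le v(i)$, the telescoping $1/(n+i-1)=\sum_{m\ge i}\bigl(\tfrac{1}{n+m-1}-\tfrac{1}{n+m}\bigr)$ gives the clean bound
\begin{equation*}
\sup_{E}\sum_i\frac{|g(\Delta_i)|}{n+i-1}\;\le\;\sum_{m=1}^{\infty}\frac{v(m)}{(n+m-1)(n+m)}.
\end{equation*}

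It remains to show the right–hand side tends to $0$, and here I would use two inputs. First, concavity of the modulus of variation (the increments $v(m)-v(m-1)$ are nonincreasing) together with Abel summation yields $HV(g)\asymp\sum_m v(m)/m^2$, so that $g\in HBV$ gives $\sum_m v(m)/m^2<\infty$. Second, split the series at $m=n$: since $n+m-1\ge m$ the tail $\sum_{m>n}v(m)/[(n+m-1)(n+m)]\le\sum_{m>n}v(m)/m^2$ is the remainder of a convergent series, while using $n+m-1\ge\sqrt{nm}$ the head is $\le\tfrac1n\sum_{m\le n}v(m)/m$, which tends to $0$ by Kronecker's lemma applied to the convergent series $\sum_m v(m)/m^2$. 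This proves $\Lambda_n^1V_1(f)\to 0$, and $\Lambda_n^2V_2(f)\to 0$ is symmetric. For the two mixed conditions I would run the identical scheme in the first (respectively second) variable, replacing $v$ by the mixed modulus $w(m):=\sup_{E\in\Omega_m}\sup_{F}\sum_i\sum_j|f(\Delta_i,J_j)|/(\lambda_i\lambda_j)$ over collections $E$ of $m$ intervals, the harmonic weight in the frozen variable being carried along unchanged.

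The main obstacle is the lower bound $HV(g)\gtrsim\sum_m v(m)/m^2$, equivalently the implication $g\in HBV\Rightarrow\sum_m v(m)/m^2<\infty$: a single near–optimal collection need not realise the increments $v(m)-v(m-1)$ simultaneously across scales, so one must assemble near–extremal collections chosen at a rapidly increasing sequence of scales into one admissible family, and the delicate point is the disjointification needed to keep all these intervals nonoverlapping. Concavity of $v$ is precisely what makes this combination succeed. Establishing the analogous summability for the mixed modulus $w$, where concavity is less immediate and the supremum over $F$ must be controlled, is the other place I expect to spend real effort.
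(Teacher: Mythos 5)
First, a point of orientation: the paper does not prove Theorem D at all --- it is quoted as a known result of Dragoshanski\u{\i} \cite{Drag} (a full-length Mat.\ Sb.\ paper, with the one-dimensional predecessor due to Sablin \cite{Sab}), so your attempt is being measured against that work rather than against an argument contained here. Your easy inclusion $CHV\subseteq HBV$ is fine, and your telescoping bound
\begin{equation*}
\sup_{E}\sum_i\frac{|g(\Delta_i)|}{n+i-1}\le\sum_{m=1}^{\infty}\frac{v(m)}{(n+m-1)(n+m)},
\end{equation*}
with the split at $m=n$ and Kronecker's lemma, is correct. The genuine gap is the step you yourself flag as ``the main obstacle'': the implication $HV(g)<\infty\Rightarrow\sum_m v(m,g)/m^2<\infty$. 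This is not a deferrable technical lemma; it is a statement \emph{strictly stronger} than the theorem you are proving. Indeed, your telescoping argument shows exactly that the Chanturia-type class $K:=\{g:\sum_m v(m,g)/m^2<\infty\}$ is contained in $CHV$, and the elementary direction gives $CHV\subseteq HBV$; so your route succeeds if and only if $HBV=K$. That equality does \emph{not} follow from ``concavity of the modulus of variation together with Abel summation'': Abel summation yields precisely the opposite inequality $HV(g)\le\sum_m v(m,g)/m(m+1)$ (i.e.\ the inclusion $K\subseteq HBV$), and concavity does not reverse it. Moreover there is good reason to doubt the equality: by the known embedding theory between Waterman and Chanturia classes, $HBV\subseteq V[\nu]$ forces $\nu(n)\ge c\,n/\log n$ (test on $n$ equal bumps of height $1/\log n$, which have harmonic variation $\le 1$), while $V[\nu]\subseteq HBV$ forces $\sum_n\nu(n)/n^2<\infty$, and no modulus satisfies both; the pointwise bound that $HV(g)\le 1$ actually gives, $v(m,g)\lesssim m/\log m$, is exactly too weak for $\sum_m v(m,g)/m^2$ to converge. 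What Sablin and Dragoshanski\u{\i} prove is the weaker equality $HBV=CHV$, not $HBV=K$; your plan silently replaces the theorem by a stronger, unproven, and quite possibly false one.

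Second, even granting a one-dimensional lemma of this kind, the genuinely two-dimensional content of Theorem D --- the mixed tail conditions $(\Lambda_n^1,\Lambda^2)V_{1,2}(f)\to0$ and $(\Lambda^1,\Lambda_n^2)V_{1,2}(f)\to0$, which are the reason this is Dragoshanski\u{\i}'s theorem and not an immediate corollary of the one-dimensional result --- is dismissed in one sentence, and with the wrong object: for the telescoping in the index $i$ you need a mixed modulus of the form $w(m):=\sup_{E\in\Omega_m}\sup_F\sum_{i\le m}\sum_j|f(\Delta_i,J_j)|/j$, unweighted in the telescoped variable, whereas the $w$ you define carries the weights $1/(\lambda_i\lambda_j)$, under which the inequality $\sum_{i\le m}\sum_j|f(\Delta_i,J_j)|/j\le w(m)$ needed in the telescoping fails (it runs the other way). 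Relatedly, applying a one-dimensional statement ``in the frozen variable'' gives no uniformity in $y$; the scheme must be run with the paper's partial moduli $v_1,v_2$ (and a correctly defined mixed modulus) from the start. As it stands, your proposal establishes only $K\subseteq CHV$, which is the easy part of the picture.
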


\begin{definition}
Let $\Phi $-be a strictly increasing continuous function on $[0,+\infty )$
with $\Phi \left( 0\right) =0$. We say that the function $f$ has bounded
partial $\Phi $-variation on $I^{2}$ and write $f\in PBV_{\Phi }$, if
\begin{equation*}
V_{\Phi }^{\left( 1\right) }\left( f\right)
:=\sup\limits_{y}\sup\limits_{\{I_{i}\}\in \Omega
_{n}}\sum\limits_{i=1}^{n}\Phi \left( |f\left( I_{i},y\right) |\right)
<\infty ,\quad n=1,2,...,
\end{equation*}%
\begin{equation*}
V_{\Phi }^{\left( 2\right) }\left( f\right)
:=\sup\limits_{x}\sup\limits_{\{J_{j}\}\in \Omega
_{m}}\sum\limits_{j=1}^{m}\Phi \left( |f\left( x,J_{j}\right) |\right)
<\infty ,\quad m=1,2,....
\end{equation*}
\end{definition}

In the case when $\Phi \left( u\right) =u^{p},\,p\geq 1$, the notion of
bounded partial $p$-variation (class $PBV_{p}$) was introduced in \cite%
{GoJAT}.

In \cite{GogSaha} it is proved that the following theorem is true.

\begin{GS}
\label{GS1} Let $\Lambda =\{\lambda _{n}\}$ and $\lambda _{n}/n\geq \lambda
_{n+1}/\left( n+1\right) >0,\ n=1,2,....\,\,\,$. \newline
1) If
\begin{equation*}
\sum_{n=1}^{\infty }\frac{\lambda _{n}}{n^{2}}<\infty ,
\end{equation*}%
then $P\Lambda BV\subset HBV$.\newline
2) If, in addition, for some $\delta >0$
\begin{equation*}
\frac{\lambda _{n}}{n}=O(\frac{\lambda _{n^{[1+\delta ]}}}{n^{[1+\delta ]}}%
)\quad \text{as}\quad n\rightarrow \infty
\end{equation*}%
and
\begin{equation*}
\sum_{n=1}^{\infty }\frac{\lambda _{n}}{n^{2}}=\infty ,
\end{equation*}%
then $P\Lambda BV\not\subset HBV$.
\end{GS}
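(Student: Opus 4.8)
The plan is to separate the two partial variations from the mixed variation. For the partial terms, for each fixed $y$ the one--dimensional $\Lambda$-variation controls the one--dimensional harmonic variation: writing the values $|f(\Delta_i,y)|$ in decreasing order and setting $b_i=|f(\Delta_i,y)|/\lambda_i$, Abel summation together with the monotonicity $\lambda_i/i\ge\lambda_{i+1}/(i+1)$ gives
\[
\sum_i\frac{|f(\Delta_i,y)|}{i}=\sum_i b_i\,\frac{\lambda_i}{i}\le\lambda_1\sup_k\sum_{i\le k}b_i\le\lambda_1\,\Lambda V_1(f;I^2).
\]
Hence the partial harmonic variations satisfy $HV_1(f)\le\lambda_1\Lambda V_1(f)$ and, symmetrically, $HV_2(f)\le\lambda_1\Lambda V_2(f)$, so only the mixed harmonic variation $HV_{1,2}(f)$ can require the summability hypothesis.

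For $HV_{1,2}$, fix collections $E=\{\Delta_i\}$, $F=\{J_j\}$ in any ordering and put $a_{ij}=|f(\Delta_i,J_j)|$. Since $f(\Delta_i,J_j)=f(\Delta_i,d_j)-f(\Delta_i,c_j)$, the supremum defining $\Lambda V_1$ (which already ranges over all orderings and all $y$) yields, for every $j$, $\sum_i a_{ij}/\lambda_i\le 2\Lambda V_1(f)=:2M_1$, and symmetrically $\sum_j a_{ij}/\lambda_j\le 2M_2=:2\Lambda V_2(f)$ for every $i$. The key step is to split the double sum along the diagonal. On the region $j\ge i$ the monotonicity of $\lambda_n/n$ gives $\lambda_j/j\le\lambda_i/i$, so
\[
\sum_{j\ge i}\frac{a_{ij}}{j}=\sum_{j\ge i}\frac{a_{ij}}{\lambda_j}\,\frac{\lambda_j}{j}\le\frac{\lambda_i}{i}\sum_{j\ge i}\frac{a_{ij}}{\lambda_j}\le 2M_2\,\frac{\lambda_i}{i},
\]
whence $\sum_{i\le j}a_{ij}/(ij)\le 2M_2\sum_i\lambda_i/i^2$. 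The symmetric region $i>j$ is handled the same way using the column bound, giving $\sum_{i>j}a_{ij}/(ij)\le 2M_1\sum_j\lambda_j/j^2$. Adding the two, $HV_{1,2}(f)\le 2(M_1+M_2)\sum_n\lambda_n/n^2<\infty$, which proves part 1.

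For part 2 I would build a step function whose mixed increments live on diagonal blocks. Using the same $\delta$, choose breakpoints $n_{s+1}\approx n_s^{[1+\delta]}$, set $B_s=[n_s,n_{s+1})$ of even length, and on the square grid place mixed differences $f(\Delta_i,J_j)=\alpha_s\,\varepsilon_i\eta_j$ for $i,j\in B_s$ (and $0$ off the diagonal blocks), where $\varepsilon_i=(-1)^i$, $\eta_j=(-1)^j$ and $\alpha_s=\big(\sum_{r\le|B_s|}1/\lambda_r\big)^{-1}$. The alternation makes every partial increment $f(\Delta_i,y)$ and $f(x,J_j)$ an alternating partial sum of magnitude $O(\alpha_s)$, and the even block length makes the full block sums vanish, so distinct blocks do not interact; a fixed $y$ then meets only one block. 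The worst ordering therefore gives $\Lambda V_1(f)\le\sup_s\alpha_s\sum_{r\le|B_s|}1/\lambda_r=1$, and symmetrically for $\Lambda V_2$, so $f\in P\Lambda BV$. On the other hand, taking absolute values removes all cancellation, so with the natural (spatial) ordering
\[
HV_{1,2}(f)\ge\sum_s\alpha_s\Big(\sum_{i\in B_s}\frac1i\Big)^2\asymp\sum_s\alpha_s\big(\log(n_{s+1}/n_s)\big)^2.
\]
The regularity hypothesis $\lambda_n/n=O(\lambda_{n^{[1+\delta]}}/n^{[1+\delta]})$ says precisely that $g(n):=\lambda_n/n$ is comparable across each jump $n_s\mapsto n_s^{[1+\delta]}$; using this to treat $g$ as essentially constant on $B_s$ and to sum the resulting geometric series, one identifies the last display with $\sum_s g(n_s)\log n_s\asymp\sum_n\lambda_n/n^2=\infty$, so $f\notin HBV$.

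I expect the genuine difficulty to lie entirely in part 2: the delicate point is to choose the sign pattern and the block length so that the partial $\Lambda$-variations stay bounded while no cancellation survives in the mixed harmonic sum, and then to verify that the block series is comparable to $\sum\lambda_n/n^2$. This last comparison is exactly where the second hypothesis enters; the dyadic choice $n_{s+1}=2n_s$ fails, since the bound $\sum_{r\le|B_s|}1/\lambda_r$ then loses a factor $\log n_s$ against $\big(\log(n_{s+1}/n_s)\big)^2$, which is what forces the near-power blocks $n_s^{[1+\delta]}$.
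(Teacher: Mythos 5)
You should know at the outset that the paper contains no proof of this statement: Theorem GS1 is quoted as a known result from \cite{GogSaha} and is used only as a black box to derive Theorem \ref{PBV} from Theorem \ref{HBV}. So your proposal can only be compared with the original argument in that reference, and in substance it reconstructs it. Part 1 of your proposal is correct and essentially complete. The observation that $\lambda_i/i\leq\lambda_1$ makes the partial harmonic variations trivially bounded by $\lambda_1\Lambda V_i(f;I^2)$ is right (Abel summation is superfluous since your $b_i\geq 0$), and the heart of the matter is exactly your diagonal split: the row and column bounds $\sum_i|f(\Delta_i,J_j)|/\lambda_i\leq 2\Lambda V_1(f;I^2)$ and $\sum_j|f(\Delta_i,J_j)|/\lambda_j\leq 2\Lambda V_2(f;I^2)$, combined with the monotonicity of $\lambda_n/n$ on the half $j\geq i$ (resp.\ $i>j$), give $HV_{1,2}(f)\leq 2\left(\Lambda V_1(f;I^2)+\Lambda V_2(f;I^2)\right)\sum_n\lambda_n/n^2$, valid for every admissible ordering of the two interval systems.

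Part 2 is a correct outline of the known type of counterexample: diagonal blocks $B_s$ with $n_{s+1}\approx n_s^{[1+\delta]}$, tensor products of alternating $0$--$1$ step functions (the even block length is what makes each factor vanish off its block, so blocks do not interact and a fixed $y$ meets only one block), the normalization $\alpha_s=\left(\sum_{r\leq|B_s|}1/\lambda_r\right)^{-1}$, and your closing remark correctly locates where the hypothesis $\lambda_n/n=O(\lambda_{n^{[1+\delta]}}/n^{[1+\delta]})$ is indispensable (dyadic blocks lose a factor $\log n_s$). Two points must still be written out to make this a proof. First, $\Lambda V_1,\Lambda V_2$ must be bounded over arbitrary systems of nonoverlapping intervals, not only the grid intervals; this costs an absolute constant, since a nonzero increment forces the interval to contain a jump of the relevant $0$--$1$ factor, there are at most about $|B_s|$ such jumps, and each increment has size at most $\alpha_s$. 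Second, your final ``$\asymp$'' is an overstatement: two-sided comparability of $g(n)=\lambda_n/n$ on all of $[1,n_{s+1}]$ is not available, only on each block. What is true, and all that is needed, is the one-sided chain $\alpha_s\geq c\,g(n_{s+1})/\log n_{s+1}$ (since $g$ is nonincreasing), $g(n_{s+1})\geq g(n_s)/C$ (the regularity hypothesis), and $\sum_{n\in B_s}\lambda_n/n^2\leq C g(n_s)\log(n_{s+1}/n_s)$, which together yield $\sum_s\alpha_s\left(\log(n_{s+1}/n_s)\right)^2\geq c'\sum_n\lambda_n/n^2=\infty$. With these details supplied your argument is sound.
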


\begin{corollary}
$PBV\subset HBV$ and $PHBV\not\subset HBV$.
\end{corollary}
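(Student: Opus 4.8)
The plan is to obtain both assertions by specializing Theorem GS1 to two particular weight sequences, exploiting the observation recorded in the text after the first definition that constant (or bounded) weights recover the Hardy-type classes. Concretely, the choice $\lambda _{n}\equiv 1$ turns $P\Lambda BV$ into $PBV$, while the choice $\lambda _{n}=n$ turns $P\Lambda BV$ into $PHBV$. Thus the entire corollary reduces to verifying that each of these two sequences meets the hypotheses of the relevant part of Theorem GS1.

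For the inclusion $PBV\subset HBV$ I would set $\lambda _{n}=1$ for all $n$ and invoke part 1) of Theorem GS1. The monotonicity hypothesis $\lambda _{n}/n\geq \lambda _{n+1}/(n+1)>0$ becomes $1/n\geq 1/(n+1)>0$, which is immediate, and the summability hypothesis reduces to $\sum_{n=1}^{\infty }1/n^{2}<\infty $. Hence part 1) applies and yields $P\Lambda BV=PBV\subset HBV$.

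For the non-inclusion $PHBV\not\subset HBV$ I would set $\lambda _{n}=n$ and invoke part 2) of Theorem GS1. Now $\lambda _{n}/n\equiv 1$, so the monotonicity condition holds with equality; the regularity condition $\lambda _{n}/n=O(\lambda _{n^{[1+\delta ]}}/n^{[1+\delta ]})$ reads $1=O(1)$ and is satisfied for any fixed $\delta >0$; and the divergence condition becomes $\sum_{n=1}^{\infty }n/n^{2}=\sum_{n=1}^{\infty }1/n=\infty $. Therefore part 2) applies and gives $P\Lambda BV=PHBV\not\subset HBV$.

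Since each assertion collapses to an elementary verification of the two analytic conditions, I anticipate no real obstacle; the only point meriting a word of care is that Theorem GS1 is stated for an arbitrary $\Lambda $ subject to its own hypotheses, so the running convention $\lambda _{n}\rightarrow \infty $ from \eqref{Lambda} need not be enforced here, which legitimizes using the constant sequence $\lambda _{n}\equiv 1$ in the first part.
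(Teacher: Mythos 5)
Your proposal is correct and is precisely the argument the paper intends: the corollary is stated immediately after Theorem GS1 and follows by specializing $\lambda_{n}\equiv 1$ (which gives $PBV$, satisfying the monotonicity and $\sum 1/n^{2}<\infty$ hypotheses of part 1) and $\lambda_{n}=n$ (which gives $PHBV$, satisfying the hypotheses of part 2 with $\sum 1/n=\infty$). Your closing remark is also the right way to handle the only delicate point, namely that the convention \eqref{Lambda} is a normalization for the interesting cases and not a hypothesis of Theorem GS1, so the constant sequence is admissible.
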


\begin{definition}[see \protect\cite{GoEJA}]
The partial modulus of variation of a function $f$ are the functions $%
v_{1}\left( n,f\right)$ and $v_{2}\left( m,f\right) $ defined by
\begin{equation*}
v_{1}\left( n,f\right) :=\sup\limits_{y}\sup\limits_{\{I_{i}\}\in \Omega
_{n}}\sum\limits_{i=1}^{n}\left| f\left( I_{i},y\right) \right| ,\quad
n=1,2,\ldots,
\end{equation*}
\begin{equation*}
v_{2}\left( m,f\right) :=\sup\limits_{x}\sup\limits_{\{J_{k}\}\in \Omega
_{m}}\sum\limits_{i=1}^{m}\left| f\left( x,J_{k}\right) \right| ,\quad
m=1,2,\ldots.
\end{equation*}
\end{definition}

For functions of one variable the concept of the modulus variation was
introduced by Chanturia \cite{Ch}.

The following theorem is proved by Goginava and Sahakian \cite{GogSaha}.

\begin{GS2}
\label{GS2}If $f\in B$ is bounded on $I^{2}$ and
\begin{equation*}
\sum\limits_{n=1}^{\infty }\frac{\sqrt{v_{j}\left( n,f\right) }}{n^{3/2}}%
<\infty ,\quad j=1,2,
\end{equation*}%
then $f\in HBV.$
\end{GS2}

$C\left( I^{2}\right) $ is the space of continuous functions on $I^{2}$ with
uniform norm%
\begin{equation*}
\left\Vert f\right\Vert _{C}:=\max\limits_{\left( x,y\right) \in
I^{2}}\left\vert f\left( x,y\right) \right\vert .
\end{equation*}

The partial moduli of continuity of a function $f\in C\left( I^{2}\right) $
in the $C$-norm are defined by%
\begin{equation*}
\omega _{1}\left( f;\delta \right) :=\max \left\{ \left\vert f\left(
x,y\right) -f\left( s,y\right) \right\vert ,x,y,s\in I,\left\vert
x-s\right\vert \leq \delta \right\} ,
\end{equation*}%
\begin{equation*}
\omega _{2}\left( f;\delta \right) :=\max \left\{ \left\vert f\left(
x,y\right) -f\left( x,t\right) \right\vert ,x,y,t\in I,\left\vert
y-t\right\vert \leq \delta \right\} ,
\end{equation*}%
while the mixed modulus of continuity is defined as follows:%
\begin{eqnarray*}
\omega _{1,2}\left( f;\delta _{1},\delta _{2}\right) &:&=\max \left\{
\left\vert f\left( x,y\right) -f\left( s,y\right) -f\left( x,t\right)
+f\left( s,t\right) \right\vert ,\right. \\
&&\left. x,y,s,t\in I,\left\vert x-s\right\vert \leq \delta _{1},\left\vert
y-t\right\vert \leq \delta _{2}\right\} .
\end{eqnarray*}

\section{Fourier-Legendre Series}

Let $p_{n}\left( x\right) $ be the Legendre orthonormal polynomial of degree
$n$. If $f$ is an integrable function on $I^{2}$, then Fourier-Legendre
series of $f$ is the series%
\begin{equation*}
\sum\limits_{n=0}^{\infty }\sum\limits_{m=0}^{\infty }\widehat{f}\left(
n,m\right) p_{n}\left( x\right) p_{n}\left( y\right) ,
\end{equation*}%
where%
\begin{equation*}
\widehat{f}\left( n,m\right)
:=\int\limits_{-1}^{1}\int\limits_{-1}^{1}f\left( s,t\right) p_{n}\left(
s\right) p_{m}\left( t\right) dsdt
\end{equation*}%
is $\left( n,m\right) $th Fourier coefficient of the function $f$.

The rectangular partial sums of double Fourier-Legendre series are defined by%
\begin{equation*}
S_{MN}f\left( x,y\right) :=\sum\limits_{n=0}^{N-1}\sum\limits_{m=0}^{M-1}%
\widehat{f}\left( n,m\right) p_{n}\left( x\right) p_{m}\left( y\right) .
\end{equation*}

It is easy to show that%
\begin{equation}
S_{MN}f\left( x,y\right) =\int\limits_{-1}^{1}\int\limits_{-1}^{1}f\left(
s,t\right) K_{n}\left( x,s\right) K_{m}\left( y,t\right) dsdt,  \label{PS}
\end{equation}%
where%
\begin{equation*}
K_{n}\left( x,s\right) :=\sum\limits_{k=0}^{n-1}p_{k}\left( s\right)
p_{k}\left( x\right) .
\end{equation*}

It is well-know the Chrestoffel-Darboux formula (see (\cite{Su}))%
\begin{equation}
K_{n}\left( x,t\right) =\frac{\gamma _{n-1}}{\gamma _{n}}\frac{p_{n-1}\left(
t\right) p_{n}\left( x\right) -p_{n-1}\left( x\right) p_{n}\left( t\right) }{%
x-t}.  \label{CD}
\end{equation}

Since%
\begin{equation*}
\frac{\gamma _{n-1}}{\gamma _{n}}\leq 1
\end{equation*}%
and%
\begin{equation}
\left\vert p_{n}\left( x\right) \right\vert \leq \frac{c}{\left(
1-x^{2}\right) ^{1/4}},x\in \left( -1,1\right)  \label{p1}
\end{equation}%
from (\ref{CD}) we have%
\begin{equation}
\left\vert K_{n}\left( x,t\right) \right\vert \leq \frac{c}{\left\vert
x-t\right\vert \left( 1-x^{2}\right) ^{1/4}\left( 1-t^{2}\right) ^{1/4}}.
\label{Kn}
\end{equation}

In \cite{Boj, Pow} it is proved that the following estimations holds%
\begin{equation}
\left\vert \int\limits_{-1}^{s}K_{n}\left( x,t\right) dt\right\vert \leq
\frac{c}{n\left( x-s\right) \left( 1-x^{2}\right) ^{1/4}}~\ \ \left( -1\leq
s<x<1\right) ~,  \label{Kn_}
\end{equation}%
\begin{equation}
\left\vert \int\limits_{s}^{1}K_{n}\left( x,t\right) dt\right\vert \leq
\frac{c}{n\left( s-x\right) \left( 1-x^{2}\right) ^{1/4}}~\ \ \left(
-1<x<s\leq 1\right) ,  \label{Kn^}
\end{equation}%
\begin{equation}
\int\limits_{x-\frac{1+x}{n}}^{x}\left\vert K_{n}\left( x,t\right)
\right\vert dt\leq \frac{c\left( 1+x\right) }{\left( 1-x^{2}\right) ^{1/2}}%
~\ \ \ \left( -1<x<1\right) ,  \label{Kn^x}
\end{equation}%
\begin{equation}
\int\limits_{x}^{x+\frac{1-x}{n}}\left\vert K_{n}\left( x,t\right)
\right\vert dt\leq \frac{c\left( 1-x\right) }{\left( 1-x^{2}\right) ^{1/2}}%
~\ \ \ \left( -1<x<1\right) .  \label{Kn_x}
\end{equation}

\section{Convergence of double Fourier-Legendre series}

The well known Dirichlet-Jordan theorem (see \cite{Zy}) states that the
trigonometric Fourier series of a function $f(x),\ x\in \lbrack 0,2\pi )$ of
bounded variation converges at every point $x$ to the value $\left[ f\left(
x+0\right) +f\left( x-0\right) \right] /2$.

Hardy \cite{Ha} generalized the Dirichlet-Jordan theorem to the double
trigonometric Fourier series. He proved that if function $f(x,y)$ has
bounded variation in the sense of Hardy ($f\in BV$), then double
trigonometric Fourier series of the continuous function $f$ converges
uniformly on $\left[ 0,2\pi \right] ^{2}$. The author \cite{GoEJA} has
proved that in Hardy's theorem there is no need to require the boundedness
of $V_{1,2}(f)$; moreover, it is proved that if $f$ is continuous function
and has bounded partial variation $\left( f\in PBV\right) $ then its double
trigonometric Fourier series converges uniformly on $[0,2\pi ]^{2}$.

Convergence of rectangular and spherical partial sums of d-dimensional
trigonometric Fourier series of functions of bounded $\Lambda $-variation
was investigated in details by Sahakian \cite{Saha}, Dyachenko \cite{D1, D2,
DW}, Bakhvalov \cite{Bakh}, Sablin \cite{Sab}.

For the one-dimensional Fourier-Legendre series the convergence of partial
sums of functions Harmonic bounded variation and other bounded generalized
variation were studied by Agakhanov, Natanson \cite{AN}, Bojanic \cite{Boj},
Belenki\u{\i} \cite{Bel}, Kvernadze \cite{Kv1, Kv2, Kv3}, Powierska \cite%
{Pow}.

In this paper we prove that the following are true.

\begin{theorem}
\label{CBV}Let $\varepsilon >0$ and $f$ be a function of $CHV\left(
I^{2}\right) \bigcap C\left( I^{2}\right) $. Then double Fourier-Legendre
series of the function $f$ uniformly converges to $f$ on $\left[
-1+\varepsilon ,1-\varepsilon \right] ^{2}$.
\end{theorem}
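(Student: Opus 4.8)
The plan is to show that $S_{MN}f(x,y)\to f(x,y)$ uniformly on $[-1+\varepsilon,1-\varepsilon]^2$ by reducing the double Fourier--Legendre problem to a one-dimensional estimate, using the product structure of the kernel in $(\ref{PS})$. First I would localize: fix a point $(x,y)$ in the compact interval and write the error $S_{MN}f(x,y)-f(x,y)$ using the fact that $\int_{-1}^{1}K_n(x,s)\,ds=1$ (since the constant function is reproduced by the partial sum of orthonormal Legendre polynomials). This gives
\begin{equation*}
S_{MN}f(x,y)-f(x,y)=\int_{-1}^{1}\int_{-1}^{1}\bigl[f(s,t)-f(x,t)-f(s,y)+f(x,y)\bigr]K_n(x,s)K_m(y,t)\,ds\,dt
\end{equation*}
plus the two ``one-variable'' remainders coming from $f(x,t)-f(x,y)$ and $f(s,y)-f(x,y)$. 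The mixed term is governed by the mixed modulus of continuity $\omega_{1,2}$ together with the harmonic variation, and the single-variable terms reduce to the classical one-dimensional Fourier--Legendre convergence for functions of harmonic bounded variation (the work of Bojanic--Powsierska cited in the excerpt), using that $f$ is continuous and that $CHV=HBV$ by Theorem~D, so the relevant Waterman-type one-dimensional behavior is in force on each line.

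The core of the argument is the mixed double integral. Here I would split the square of integration $I^2$ around the point $(x,y)$ into the near-diagonal region and the far region in each variable, exactly matched to the four kernel estimates $(\ref{Kn_})$--$(\ref{Kn_x})$. On $|s-x|\ge (1-x)/n$ (and similarly in $t$) I would integrate $K_n(x,s)$ by parts against the variation of the difference $f(s,t)-f(x,t)-f(s,y)+f(x,y)$ in $s$; the boundary and integral terms are controlled by the kernel bounds $(\ref{Kn_})$, $(\ref{Kn^})$, which carry the crucial factor $1/n$ and the compactness factor $(1-x^2)^{-1/4}$ that stays bounded on $[-1+\varepsilon,1-\varepsilon]$. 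The summation in $s$ over the nonoverlapping dyadic-type blocks produces weights $\lambda_i^{-1}$, so the total is dominated by the $\Lambda$-variation $HV_{1,2}(f)$ restricted to a shrinking window, which tends to $0$ because $f\in CHV$. On the near-diagonal strips $|s-x|<(1-x)/n$, $|t-y|<(1-y)/n$ I would instead use $(\ref{Kn^x})$, $(\ref{Kn_x})$ to bound $\int|K_n(x,s)|\,ds$ by a constant (uniform on the compact set), and then estimate the difference $f(s,t)-f(x,t)-f(s,y)+f(x,y)$ directly by the mixed modulus of continuity $\omega_{1,2}(f;(1-x)/n,(1-y)/m)$, which $\to 0$ by uniform continuity of $f$.

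The main obstacle will be making the integration-by-parts / variation estimate uniform in $(x,y)$ while simultaneously exploiting the \emph{continuity in variation} hypothesis $f\in CHV$ rather than mere bounded variation. Bounded harmonic variation alone would give a bounded, but not vanishing, contribution from the far region; it is precisely the hypothesis $\lim_{n\to\infty}H_nV(f)=0$ that forces the tail sums $\sum_i \lambda_i^{-1}|f(\Delta_i,\cdot)|$ over intervals accumulating away from the diagonal to become small as $N,M\to\infty$. The delicate point is to organize the partition of the integration region so that the blocks entering the variation sum are the ones that are genuinely ``cut off'' at scale $1/n$ from $x$ (respectively $1/m$ from $y$), so that their $\Lambda$-variation is the truncated tail $H_nV_{1,2}$ and $H_mV_{1,2}$; keeping these two truncations coordinated in the mixed term, and checking that the constants arising from the factors $(1-x^2)^{-1/4}$ remain uniformly bounded on $[-1+\varepsilon,1-\varepsilon]^2$, is where the real care is needed. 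Once these pieces are assembled, each of the three contributions (mixed, and the two single-variable reductions) tends to $0$ uniformly, giving the claimed uniform convergence.
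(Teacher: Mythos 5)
Your overall architecture is sound and genuinely different from the paper's: using $\int_{-1}^{1}K_{n}\left( x,s\right) ds=1$ to split the error into two one-dimensional ``section'' errors plus a mixed-difference term is cleaner than the paper's decomposition into four quadrants around $\left( x,y\right) $ (the paper performs a discrete analogue of your mixed/marginal splitting only inside the far--far region, in $III_{1}$--$III_{4}$). However, there is a genuine gap at the decisive step: the mechanism by which $f\in CHV$ forces the far-region contribution to vanish. After summation by parts over the blocks $\Delta _{j}=\left[ s_{j},s_{j+1}\right] $, $s_{j}=x+j\left( 1-x\right) /n$, the integrated kernel bounds (\ref{Kn_}), (\ref{Kn^}) produce on the $j$-th block the weight $c/\left( n\left( s_{j}-x\right) \right) =c/\left( j\left( 1-x\right) \right) $: the weights are $1/j$ with $j$ starting from $1$, \emph{for every} $n$. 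Hence the far region contributes a full harmonic variation sum $\sum_{j\geq 1}\left\vert f\left( \Delta _{j},\cdot \right) \right\vert /j$ over intervals of length $\left( 1-x\right) /n$ --- not the truncated tail $H_{n}V_{1,2}$ that your proposal invokes. No reorganization of the partition can make the truncation index grow with the degree $n$: the block adjacent to the excluded strip always enters with weight comparable to $1$. As you yourself note, for such a full sum HBV gives only boundedness; but CHV by itself does not apply to it either, so as written the argument does not close.

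The missing idea is the two-parameter device of the paper's estimates (\ref{1}) and (\ref{2}). Split the harmonic sum at an auxiliary index $k$ independent of $n$: the head $j<k$ is at most $c\,\omega _{1}\left( f;\frac{1}{n}\right) \log \left( k+1\right) $, because each block has length $\left( 1-x\right) /n$ and $f$ is uniformly continuous; the tail $j\geq k$ is dominated by the shifted variation $\left\{ j+k\right\} V$, i.e.\ by $\Lambda _{k}^{1}V_{1}$ (resp.\ its mixed analogues), which tends to $0$ as $k\rightarrow \infty $ precisely by the CHV hypothesis. Letting $n,m\rightarrow \infty $ first and then $k\rightarrow \infty $ gives the required uniform $o\left( 1\right) $. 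This device is needed not only for the block-to-block differences but also for your ``boundary and integral terms'': the within-block oscillations $g\left( s_{j}+s,t\right) -g\left( s_{j},t\right) $, $0<s<\left( 1-x\right) /n$, also form sums over nonoverlapping collections with weights $1/j$, and a pure modulus-of-continuity bound there yields only $\omega _{1}\left( f;1/n\right) \log n$, which need not tend to zero for a general continuous $f$. Two further points need care: literal Riemann--Stieltjes integration by parts ``against the variation'' is not available, since a function of bounded harmonic variation need not have finite ordinary variation on any interval (one must use discrete Abel summation, as the paper does); and your reduction of the marginal terms to ``classical'' one-dimensional results requires a quantitative version uniform in the section parameter --- which, once written out, is again exactly the same $\min_{k}$ argument.
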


Theorem D and Theorem \ref{CBV} imply

\begin{theorem}
\label{HBV}Let $\varepsilon >0$ and $f$ be a function of $HBV\left(
I^{2}\right) \bigcap C\left( I^{2}\right) $. Then double Fourier-Legendre
series of the function $f$ uniformly converges to $f$ on $\left[
-1+\varepsilon ,1-\varepsilon \right] ^{2}$.
\end{theorem}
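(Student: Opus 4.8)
The plan is to deduce Theorem~\ref{HBV} from Theorem~\ref{CBV} using the single class identity supplied by Theorem~D; this is precisely the route the paper advertises with the phrase ``Theorem D and Theorem~\ref{CBV} imply.'' All of the genuine analytic work — the Christoffel--Darboux kernel bounds \eqref{Kn}--\eqref{Kn_x} and the treatment of the Legendre--Dirichlet kernels away from the endpoints $\pm 1$ — has already been carried out in establishing Theorem~\ref{CBV}, so no new estimate of that kind is needed here. The argument is therefore a short reduction of one function class to another.

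First I would invoke Theorem~D, which asserts $HBV=CHV$. In particular this gives the inclusion $HBV\subset CHV$, so that intersecting both sides with $C\left( I^{2}\right) $ yields
\[
HBV\left( I^{2}\right) \cap C\left( I^{2}\right) \subset CHV\left( I^{2}\right) \cap C\left( I^{2}\right) .
\]
Consequently every function $f$ satisfying the hypotheses of Theorem~\ref{HBV}, namely $f\in HBV\left( I^{2}\right) \cap C\left( I^{2}\right) $, automatically satisfies the hypotheses of Theorem~\ref{CBV}.

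Second, I would apply Theorem~\ref{CBV} to such an $f$: for the given $\varepsilon >0$ the double Fourier--Legendre series of $f$ converges uniformly to $f$ on $\left[ -1+\varepsilon ,1-\varepsilon \right] ^{2}$. Since $f$ was an arbitrary element of $HBV\left( I^{2}\right) \cap C\left( I^{2}\right) $, this is exactly the conclusion of Theorem~\ref{HBV}, and the proof is complete.

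The only point that genuinely requires care — and hence the sole potential obstacle — is ensuring that Theorem~D is being applied in the correct two-dimensional form, i.e.\ that the identity $HBV=CHV$ is understood with $HBV\left( I^{2}\right) $ the Hardy-type class of Definition~1 (with $\lambda _{n}=n$) and $CHV\left( I^{2}\right) $ the class $C\left( H,H\right) V\left( I^{2}\right) $ of Definition~2. Granting the two-variable version of Dragoshanski's identity as stated in the excerpt, the deduction involves no further computation and is immediate.
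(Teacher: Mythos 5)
Your proposal is correct and is exactly the paper's own argument: the paper derives Theorem~\ref{HBV} by combining Theorem~D ($HBV=CHV$) with Theorem~\ref{CBV}, just as you do. The caveat you raise about the two-dimensional reading of Dragoshanski's identity is appropriate, and that is indeed how the paper uses it.
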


Theorem GS1 and Theorem \ref{HBV} imply

\begin{theorem}
\label{PBV}Let $f\in P\Lambda BV(I^{2})\bigcap C\left( I^{2}\right) $ with
\begin{equation*}
\sum\limits_{j=1}^{\infty }\frac{\lambda _{j}}{j^{2}}<\infty ,\quad \frac{%
\lambda _{j}}{j}\downarrow 0.
\end{equation*}%
Then double Fourier-Legendre series of the function $f$ uniformly converges
to $f$ on $\left[ -1+\varepsilon ,1-\varepsilon \right] ^{2}$,$\varepsilon
>0.$
\end{theorem}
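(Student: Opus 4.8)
The plan is to obtain this statement as an immediate consequence of the inclusion supplied by Theorem GS1 together with the convergence result already recorded in Theorem \ref{HBV}; no fresh analysis of the Fourier--Legendre kernel is required, since all the substantive estimates built on \eqref{Kn}--\eqref{Kn_x} have been spent on the proof of Theorem \ref{CBV} (and hence, via Theorem D, on Theorem \ref{HBV}). Thus the whole task reduces to checking that the two hypotheses imposed here on $\Lambda=\{\lambda_j\}$ place $f$ inside the class $HBV$.

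First I would confirm that $\Lambda$ satisfies the requirements of Theorem GS1. The assumption $\lambda_j/j\downarrow 0$ says exactly that the sequence $\{\lambda_j/j\}$ is nonincreasing, so $\lambda_n/n\ge\lambda_{n+1}/(n+1)$ for every $n$; and since each $\lambda_j>0$ we also have $\lambda_{n+1}/(n+1)>0$. Hence the monotonicity-with-positivity condition $\lambda_n/n\ge\lambda_{n+1}/(n+1)>0$ of Theorem GS1 holds. The second hypothesis, $\sum_{j=1}^{\infty}\lambda_j/j^{2}<\infty$, is assumed outright. It is worth noting that these two requirements are independent (monotone decay of $\lambda_j/j$ does not force the series to converge, as $\lambda_j$ of order $j/\log j$ shows) and are compatible with the standing convention \eqref{Lambda} that $\lambda_j\uparrow\infty$; for instance a sequence with $\lambda_j$ of order $\sqrt{j}$ meets all of them at once.

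With both hypotheses in hand, part 1) of Theorem GS1 delivers the inclusion $P\Lambda BV\subset HBV$. Therefore $f\in P\Lambda BV(I^{2})\cap C(I^{2})\subset HBV(I^{2})\cap C(I^{2})$, and applying Theorem \ref{HBV} to $f$ yields uniform convergence of the double Fourier--Legendre series of $f$ to $f$ on $[-1+\varepsilon,1-\varepsilon]^{2}$ for every $\varepsilon>0$, which is the assertion. The only point demanding any attention --- the \emph{obstacle}, such as it is --- is reading the hypothesis $\lambda_j/j\downarrow 0$ as a statement about the \emph{ratio} $\lambda_j/j$ rather than about $\lambda_j$ itself, which is precisely the form Theorem GS1 requires; everything else is a direct substitution into the two cited theorems.
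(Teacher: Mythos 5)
Your proposal is correct and follows exactly the paper's own route: the paper derives Theorem \ref{PBV} precisely by noting that Theorem GS1 (part 1) gives $P\Lambda BV\subset HBV$ under the stated hypotheses on $\Lambda$, and then applying Theorem \ref{HBV}. Your verification that $\lambda_j/j\downarrow 0$ supplies the monotonicity condition of Theorem GS1 is the only checking needed, and you have done it correctly.
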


\begin{corollary}
\label{CPBV}If $f\in P\left\{ \frac{n}{\log ^{1+\delta }\left( n+1\right) }%
\right\} BV\left( I^{2}\right) \bigcap C\left( I^{2}\right) $ for some $%
\delta >0$. Then double Fourier-Legendre series of the function $f$
uniformly converges to $f$ on $\left[ -1+\varepsilon ,1-\varepsilon \right]
^{2}$.
\end{corollary}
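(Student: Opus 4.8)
The plan is to deduce the corollary directly from Theorem \ref{PBV} by specializing the weight sequence to $\lambda _{n}=n/\log ^{1+\delta }(n+1)$. With this choice one has the identification
\begin{equation*}
P\left\{ \tfrac{n}{\log ^{1+\delta }(n+1)}\right\} BV\left( I^{2}\right) =P\Lambda BV\left( I^{2}\right) ,
\end{equation*}
so it suffices to check that this particular $\Lambda $ satisfies the two hypotheses of Theorem \ref{PBV}, namely the summability condition $\sum _{j=1}^{\infty }\lambda _{j}/j^{2}<\infty $ and the monotonicity condition $\lambda _{j}/j\downarrow 0$. Once these are confirmed, the uniform convergence on $[-1+\varepsilon ,1-\varepsilon ]^{2}$ is supplied verbatim by that theorem, and nothing further needs to be proved.

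First I would verify the summability condition. Here
\begin{equation*}
\frac{\lambda _{j}}{j^{2}}=\frac{1}{j\,\log ^{1+\delta }(j+1)},
\end{equation*}
so the series in question is, apart from finitely many initial terms, the Bertrand-type series $\sum _{j}1/\bigl(j(\log j)^{1+\delta }\bigr)$. By the integral test,
\begin{equation*}
\int _{2}^{\infty }\frac{dx}{x\,(\log x)^{1+\delta }}=\left[ \frac{(\log x)^{-\delta }}{-\delta }\right] _{2}^{\infty }=\frac{(\log 2)^{-\delta }}{\delta }<\infty
\end{equation*}
precisely because $\delta >0$, which gives $\sum _{j=1}^{\infty }\lambda _{j}/j^{2}<\infty $. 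Next, for the monotonicity condition, since
\begin{equation*}
\frac{\lambda _{j}}{j}=\frac{1}{\log ^{1+\delta }(j+1)}
\end{equation*}
and $\log ^{1+\delta }(j+1)$ is strictly increasing to $+\infty $, the ratio $\lambda _{j}/j$ is strictly decreasing to $0$, i.e. $\lambda _{j}/j\downarrow 0$.

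It remains only to note that this $\Lambda $ genuinely sits inside the framework of (\ref{Lambda}): one has $\lambda _{1}=1/\log ^{1+\delta }2>1$ (as $\log 2<1$), $\lambda _{j}=j/\log ^{1+\delta }(j+1)\to \infty $ since linear growth dominates any power of the logarithm, and $\lambda _{j}$ is eventually increasing, so the arbitrary ordering of the intervals lets us assume monotonicity without loss of generality, exactly as in the paper. With both hypotheses of Theorem \ref{PBV} now verified, that theorem applies and yields the asserted uniform convergence. I do not expect any genuine obstacle in this argument: the corollary is essentially a worked instance of Theorem \ref{PBV}, and the only substantive point is the convergence of the Bertrand-type series, which is classical and rests solely on the assumption $\delta >0$.
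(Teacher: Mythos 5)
Your proof is correct and is exactly the intended derivation: the paper states this corollary as an immediate consequence of Theorem \ref{PBV} applied to $\lambda_n = n/\log^{1+\delta}(n+1)$, which is precisely what you do, with both hypotheses (the Bertrand-type series convergence and $\lambda_j/j \downarrow 0$) verified correctly. Your side remark about $\lambda_j$ being only eventually increasing is a fair observation and handled consistently with the paper's convention on reordering.
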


Theorem GS2 and Theorem \ref{HBV} imply

\begin{theorem}
\label{PCV}Let $f\in C\left( I^{2}\right) $ and%
\begin{equation*}
\sum\limits_{n=1}^{\infty }\frac{\sqrt{v_{j}\left( n,f\right) }}{n^{3/2}}%
<\infty ~\ \ \ \ j=1,2.
\end{equation*}%
Then double Fourier-Legendre series of the function $f$ uniformly converges
to $f$ on $\left[ -1+\varepsilon ,1-\varepsilon \right] ^{2}$,$\varepsilon
>0.$
\end{theorem}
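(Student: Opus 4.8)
The plan is to obtain Theorem~\ref{PCV} as an immediate consequence of the implication chain Theorem~GS2 followed by Theorem~\ref{HBV}, so that the whole task reduces to verifying that the hypotheses of Theorem~GS2 are satisfied under the present assumptions. First I would note that a function $f\in C\left( I^{2}\right) $ is automatically bounded: since $I^{2}=[-1,1]^{2}$ is compact and $f$ is continuous, $f$ attains its maximum and minimum, so $\left\Vert f\right\Vert _{C}<\infty $ and hence $f\in B$. Consequently both conditions required by Theorem~GS2---namely that $f$ be bounded on $I^{2}$ and that $\sum_{n=1}^{\infty }\sqrt{v_{j}\left( n,f\right) }/n^{3/2}<\infty $ for $j=1,2$---hold, the latter being exactly the standing hypothesis.

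Next I would invoke Theorem~GS2 directly to conclude that $f\in HBV$. Together with the continuity assumed from the outset, this places $f$ in the class $HBV\left( I^{2}\right) \bigcap C\left( I^{2}\right) $, which is precisely the hypothesis of Theorem~\ref{HBV}. Applying Theorem~\ref{HBV} then yields that the double Fourier--Legendre series of $f$ converges uniformly to $f$ on every square $\left[ -1+\varepsilon ,1-\varepsilon \right] ^{2}$ with $\varepsilon >0$, which is the assertion of Theorem~\ref{PCV}. No further estimates are needed.

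Since the substantive analytic content is carried entirely by the two previously established results, I do not expect any genuine obstacle in this deduction; the only point demanding (trivial) attention is confirming the boundedness hypothesis of Theorem~GS2, which follows at once from compactness of $I^{2}$. The deeper work---estimating the Legendre kernel through the Christoffel--Darboux formula (\ref{CD}) and the bounds (\ref{Kn})--(\ref{Kn_x}), and showing that membership in $HBV\bigcap C$ forces uniform convergence away from the endpoints $\pm 1$---has already been done in the proof of Theorem~\ref{HBV}, so the present statement follows formally by composing the two implications.
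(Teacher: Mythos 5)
Your proposal is correct and matches the paper's own derivation exactly: the paper obtains Theorem \ref{PCV} precisely by combining Theorem GS2 (boundedness plus the modulus-of-variation condition gives $f\in HBV$) with Theorem \ref{HBV}, and your verification that continuity on the compact square $I^{2}$ supplies the boundedness hypothesis is the only detail needed.
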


\begin{corollary}
\label{CPCV1}Let $f\in f\in C\left( I^{2}\right) $ and $v_{1}\left(
k,f\right) =O\left( k^{\alpha }\right) ,v_{2}\left( k,f\right) =O\left(
k^{\beta }\right) ,0<\alpha ,\beta <1$. Then double Fourier-Legendre series
of the function $f$ uniformly converges to $f$ on $\left[ -1+\varepsilon
,1-\varepsilon \right] ^{2}$,$\varepsilon >0.$
\end{corollary}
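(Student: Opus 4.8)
The plan is to deduce the corollary directly from Theorem \ref{PCV} by verifying that the power-type bounds on the partial moduli of variation force convergence of the two series appearing in the hypothesis of that theorem. Since $f\in C(I^2)$ is already assumed, it remains only to check the summability condition
\[
\sum_{n=1}^{\infty}\frac{\sqrt{v_{j}\left( n,f\right) }}{n^{3/2}}<\infty,\qquad j=1,2.
\]

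First I would treat the case $j=1$. By hypothesis $v_{1}(k,f)=O(k^{\alpha})$ with $0<\alpha<1$, so there is a constant $c>0$ with $\sqrt{v_{1}(n,f)}\le c\,n^{\alpha/2}$ for all $n$. Substituting gives
\[
\frac{\sqrt{v_{1}(n,f)}}{n^{3/2}}\le\frac{c}{n^{(3-\alpha)/2}}.
\]
The exponent satisfies $(3-\alpha)/2>1$ precisely because $\alpha<1$; hence the right-hand side is the general term of a convergent $p$-series, and the series $\sum_{n}\sqrt{v_{1}(n,f)}\,/\,n^{3/2}$ converges.

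The case $j=2$ is identical, using $v_{2}(k,f)=O(k^{\beta})$ with $0<\beta<1$ to obtain the exponent $(3-\beta)/2>1$. With both hypotheses of Theorem \ref{PCV} verified, that theorem applies and yields uniform convergence of the double Fourier-Legendre series of $f$ to $f$ on $[-1+\varepsilon,1-\varepsilon]^{2}$ for every $\varepsilon>0$. There is no genuine obstacle here; the only point worth isolating is the strict inequality $\alpha,\beta<1$, which is exactly what keeps the resulting exponents above the critical value $1$ needed for convergence of the $p$-series, so the corollary is an immediate specialization of the theorem.
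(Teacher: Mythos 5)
Your proof is correct and follows exactly the route the paper intends: Corollary \ref{CPCV1} is stated as an immediate specialization of Theorem \ref{PCV}, and the only thing to check is that $v_j(k,f)=O(k^{\alpha})$ with $\alpha<1$ makes $\sum_n \sqrt{v_j(n,f)}/n^{3/2}$ a convergent $p$-series, which is precisely your computation. Nothing further is needed.
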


\begin{corollary}
\label{CPCV2}Let $f\in PBV_{p}\bigcap C\left( I^{2}\right) ,p\geq 1$. Then
double Fourier-Legendre series of the function $f$ uniformly converges to $f$
on $\left[ -1+\varepsilon ,1-\varepsilon \right] ^{2}$,$\varepsilon >0.$
\end{corollary}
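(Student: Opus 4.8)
The plan is to deduce this corollary directly from Theorem \ref{PCV} by showing that membership in $PBV_p$ forces the partial moduli of variation $v_j(n,f)$ to grow slowly enough that the series condition of Theorem \ref{PCV} is automatically satisfied. So the whole argument reduces to checking one hypothesis.

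First I would fix $j=1$ (the case $j=2$ being entirely symmetric, with the roles of the two variables interchanged) and relate the $\ell^1$-type quantity $v_1(n,f)$ to the $\ell^p$-type quantity $V_\Phi^{(1)}(f)$ corresponding to $\Phi(u)=u^p$. For any admissible collection $\{I_i\}\in\Omega_n$ and any fixed $y$, Hölder's inequality with exponents $p$ and $p/(p-1)$ gives
\begin{equation*}
\sum_{i=1}^n |f(I_i,y)| \leq \left(\sum_{i=1}^n |f(I_i,y)|^p\right)^{1/p} n^{1-1/p}.
\end{equation*}
Taking the supremum over all such $\{I_i\}$ and over $y$ yields
\begin{equation*}
v_1(n,f) \leq \left(V_\Phi^{(1)}(f)\right)^{1/p} n^{1-1/p} = C\, n^{1-1/p},
\end{equation*}
where $C=(V_\Phi^{(1)}(f))^{1/p}<\infty$ precisely because $f\in PBV_p$.

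Next I would insert this bound into the series appearing in Theorem \ref{PCV}. Since $\sqrt{v_1(n,f)} \leq C^{1/2} n^{(1-1/p)/2}$, the general term obeys
\begin{equation*}
\frac{\sqrt{v_1(n,f)}}{n^{3/2}} \leq C^{1/2}\, n^{(1-1/p)/2 - 3/2} = C^{1/2}\, n^{-1-1/(2p)}.
\end{equation*}
Because $p\geq 1$ is finite, the exponent $-1-1/(2p)$ is strictly smaller than $-1$, so $\sum_n n^{-1-1/(2p)}$ converges, and hence $\sum_n \sqrt{v_1(n,f)}/n^{3/2}<\infty$. Repeating the estimate with $V_\Phi^{(2)}(f)$ gives the identical conclusion for $v_2(n,f)$.

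Having verified both summability conditions, the hypotheses of Theorem \ref{PCV} are met, and that theorem immediately delivers uniform convergence of the double Fourier–Legendre series of $f$ to $f$ on $[-1+\varepsilon,1-\varepsilon]^2$. I do not expect any genuine obstacle here; the only point requiring care is the exponent bookkeeping, namely confirming that the Hölder loss $n^{1-1/p}$ is mild enough that, after taking the square root and dividing by $n^{3/2}$, the resulting power remains below $-1$ for every finite $p\geq 1$ (the borderline $p\to\infty$ giving exponent $-1$, which is exactly why finiteness of $p$ is used).
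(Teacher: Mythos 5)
Your proposal is correct and follows exactly the route the paper intends: Corollary \ref{CPCV2} is presented as a consequence of Theorem \ref{PCV}, and your H\"older estimate $v_j(n,f)\leq (V_\Phi^{(j)}(f))^{1/p}\,n^{1-1/p}$, which makes $\sum_n \sqrt{v_j(n,f)}/n^{3/2}$ converge with exponent $-1-1/(2p)<-1$ for every finite $p\geq 1$, is precisely the verification the paper leaves implicit. The bookkeeping is sound, including the observation that finiteness of $p$ is what keeps the exponent strictly below $-1$.
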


\section{Proofs of Main Results}

\begin{proof}[Proof of Theorem \protect\ref{CBV}]
Denote%
\begin{equation}
s_{j}:=x+\frac{j\left( 1-x\right) }{n},j=1,2,...,n,x\in \left( -1,1\right) ,
\label{sj}
\end{equation}%
\begin{equation}
t_{i}:=y-\frac{i\left( 1+y\right) }{m},i=1,2,...,m,y\in \left( -1,1\right)
\label{ti}
\end{equation}%
\begin{equation}
g\left( s,t\right) :=f\left( s,t\right) -f\left( x,y\right) .  \label{g}
\end{equation}

Then from (\ref{PS}) we can write%
\begin{eqnarray}
&&S_{mn}f\left( x,y\right) -f\left( x,y\right)  \label{I1-I4} \\
&=&\int\limits_{-1}^{1}\int\limits_{-1}^{1}g\left( s,t\right) K_{n}\left(
x,s\right) K_{m}\left( y,t\right) dsdt  \notag \\
&=&\left(
\int\limits_{x}^{1}\int\limits_{-1}^{y}+\int\limits_{-1}^{x}\int%
\limits_{-1}^{y}+\int\limits_{-1}^{x}\int\limits_{y}^{1}+\int\limits_{x}^{1}%
\int\limits_{y}^{1}\right) \left( g\left( s,t\right) K_{n}\left( x,s\right)
K_{m}\left( y,t\right) dsdt\right)  \notag \\
&=&I_{1}+I_{2}+I_{3}+I_{4},  \notag
\end{eqnarray}%
\begin{eqnarray}
&&I_{1}  \label{II1-II4} \\
&=&\left(
\int\limits_{x}^{s_{1}}\int\limits_{-1}^{t_{1}}+\int\limits_{x}^{s_{1}}\int%
\limits_{t_{1}}^{y}+\int\limits_{s_{1}}^{1}\int\limits_{-1}^{t_{1}}+\int%
\limits_{s_{1}}^{1}\int\limits_{t_{1}}^{y}\right) \left( g\left( s,t\right)
K_{n}\left( x,s\right) K_{m}\left( y,t\right) dsdt\right)  \notag \\
&=&II_{1}+II_{2}+II_{3}+II_{4}.  \notag
\end{eqnarray}

For $II_{4}$ we have%
\begin{eqnarray}
II_{4} &=&\int\limits_{t_{1}}^{y}\left(
\sum\limits_{j=1}^{n-2}\int\limits_{s_{j}}^{s_{j+1}}\left( g\left(
s,t\right) -g\left( s_{j},t\right) \right) K_{n}\left( x,s\right) ds\right)
K_{m}\left( y,t\right) dt  \label{II41-II43} \\
&&+\int\limits_{t_{1}}^{y}\left( \int\limits_{s_{n-1}}^{1}\left( g\left(
s,t\right) -g\left( s_{n-1},t\right) \right) K_{n}\left( x,s\right)
ds\right) K_{m}\left( y,t\right) dt  \notag \\
&&+\int\limits_{t_{1}}^{y}\left( \sum\limits_{j=1}^{n-1}g\left(
s_{j},t\right) \int\limits_{s_{j}}^{s_{j+1}}K_{n}\left( x,s\right) ds\right)
K_{m}\left( y,t\right) dt  \notag \\
&=&II_{41}+II_{42}+II_{43}.  \notag
\end{eqnarray}

From (\ref{p1}) and (\ref{Kn}) we have%
\begin{eqnarray}
\left\vert II_{42}\right\vert  &\leq &2\left\Vert f\right\Vert
_{C}\int\limits_{t_{1}}^{y}\sum\limits_{j=0}^{m-1}\left\vert p_{j}\left(
t\right) p_{j}\left( y\right) \right\vert
dt\int\limits_{s_{n-1}}^{1}\left\vert K_{n}\left( x,s\right) \right\vert ds
\label{II42} \\
&\leq &c\left\Vert f\right\Vert _{C}~\int\limits_{y-\frac{1+y}{m}}^{y}\frac{%
mdt}{\left( 1-t^{2}\right) ^{1/4}\left( 1-y^{2}\right) ^{1/4}}  \notag \\
&&\times \int\limits_{s_{n-1}}^{1}\frac{ds}{\left( s-x\right) \left(
1-x^{2}\right) ^{1/4}\left( 1-s^{2}\right) ^{1/4}}  \notag \\
&\leq &c\left( \varepsilon \right) \left\Vert f\right\Vert
_{C}\int\limits_{x+\frac{n-1}{n}\left( 1-x\right) }^{1}\frac{ds}{\left(
1-s\right) ^{1/4}}  \notag \\
&\leq &\frac{c\left( \varepsilon \right) \left\Vert f\right\Vert _{C}}{%
n^{3/4}}=o\left( 1\right) \text{ \ \ as \ }n,m\rightarrow \infty   \notag
\end{eqnarray}%
uniformly with respect to $\left( x,y\right) \in \left[ -1+\varepsilon
,1-\varepsilon \right] ^{2}.$

From (\ref{Kn}), (\ref{sj}) and (\ref{ti}) we obtain%
\begin{eqnarray}
&&\left\vert II_{41}\right\vert \label{II41a} \\
&\leq &\int\limits_{t_{1}}^{y}\left(
\sum\limits_{j=1}^{n-2}\int\limits_{s_{j}}^{s_{j+1}}\frac{\left\vert g\left(
s,t\right) -g\left( s_{j},t\right) \right\vert }{\left( s-x\right) \left(
1-x^{2}\right) ^{1/4}\left( 1-s^{2}\right) ^{1/4}}ds\right) \left\vert
K_{m}\left( y,t\right) \right\vert dt  \notag   \\
&\leq &c\left( \varepsilon \right) m\int\limits_{t_{1}}^{y}\left(
\sum\limits_{j=1}^{n-2}\int\limits_{s_{j}}^{s_{j+1}}\frac{\left\vert g\left(
s,t\right) -g\left( s_{j},t\right) \right\vert }{\left( s_{j}-x\right)
\left( 1-s_{j+1}\right) ^{1/4}}ds\right) dt  \notag \\
&\leq &c\left( \varepsilon \right) n^{5/4}m\int\limits_{t_{1}}^{y}\left(
\sum\limits_{j=1}^{n-2}\frac{1}{j\left( n-j\right) ^{1/4}}%
\int\limits_{s_{j}}^{s_{j+1}}\left\vert g\left( s,t\right) -g\left(
s_{j},t\right) \right\vert ds\right) dt  \notag \\
&=&c\left( \varepsilon \right) n^{5/4}m\int\limits_{t_{1}}^{y}\left(
\int\limits_{0}^{\frac{1-x}{n}}\sum\limits_{j=1}^{n-2}\frac{\left\vert
g\left( s+s_{j},t\right) -g\left( s_{j},t\right) \right\vert }{j\left(
n-j\right) ^{1/4}}ds\right) dt  \notag \\
&=&c\left( \varepsilon \right) n^{5/4}m\int\limits_{t_{1}}^{y}\left(
\int\limits_{0}^{\frac{1-x}{n}}\sum\limits_{1\leq j<n/2}\frac{\left\vert
g\left( s+s_{j},t\right) -g\left( s_{j},t\right) \right\vert }{j\left(
n-j\right) ^{1/4}}ds\right) dt  \notag \\
&&+c\left( \varepsilon \right) n^{5/4}m\int\limits_{t_{1}}^{y}\left(
\int\limits_{0}^{\frac{1-x}{n}}\sum\limits_{n/2\leq j<n-1}\frac{\left\vert
g\left( s+s_{j},t\right) -g\left( s_{j},t\right) \right\vert }{j\left(
n-j\right) ^{1/4}}ds\right) dt  \notag \\
&\leq &c\left( \varepsilon \right) nm\int\limits_{t_{1}}^{y}\left(
\int\limits_{0}^{\frac{1-x}{n}}\sum\limits_{1\leq j<n/2}\frac{\left\vert
g\left( s+s_{j},t\right) -g\left( s_{j},t\right) \right\vert }{j}ds\right) dt
\notag \\
&&+c\left( \varepsilon \right) n^{1/4}m\int\limits_{t_{1}}^{y}\left(
\int\limits_{0}^{\frac{1-x}{n}}\sum\limits_{n/2\leq j<n-1}\frac{\left\vert
g\left( s+s_{j},t\right) -g\left( s_{j},t\right) \right\vert }{\left(
n-j\right) ^{1/4}}ds\right) dt.  \notag
\end{eqnarray}

It is easy to show that%
\begin{eqnarray}
&&\sum\limits_{1\leq j<n/2}\frac{\left\vert g\left( s+s_{j},t\right)
-g\left( s_{j},t\right) \right\vert }{j}  \label{1} \\
&\leq &\min\limits_{1\leq k<n}\left\{ \sum\limits_{1\leq j<k}\frac{%
\left\vert g\left( s+s_{j},t\right) -g\left( s_{j},t\right) \right\vert }{j}%
+\sum\limits_{k\leq j<n}\frac{\left\vert g\left( s+s_{j},t\right) -g\left(
s_{j},t\right) \right\vert }{j}\right\}  \notag \\
&\leq &c\left( \varepsilon \right) \min\limits_{1\leq k<n}\left\{ \omega
_{1}\left( f;\frac{1}{n}\right) \log \left( k+1\right) +\left\{ j+k\right\}
V_{1}\left( f;I^{2}\right) \right\}  \notag \\
&=&o\left( 1\right) \text{ \ \ as \ \ }n\rightarrow \infty  \notag
\end{eqnarray}%
uniformly with respect to $\left( x,y\right) \in \left[ -1+\varepsilon
,1-\varepsilon \right] ^{2}.$

On the other hand,%
\begin{eqnarray}
&&\frac{1}{n^{3/4}}\sum\limits_{n/2\leq j<n-1}\frac{\left\vert g\left(
s+s_{j},t\right) -g\left( s_{j},t\right) \right\vert }{\left( n-j\right)
^{1/4}}  \label{2} \\
&\leq &\sum\limits_{n/2\leq j<n-1}\frac{\left\vert g\left( s+s_{j},t\right)
-g\left( s_{j},t\right) \right\vert }{n-j}  \notag \\
&\leq &c\min\limits_{1\leq k<n}\left\{ \omega _{1}\left( f;\frac{1}{n}%
\right) \log \left( k+1\right) +\left\{ j+k\right\} V_{1}\left(
f;I^{2}\right) \right\}  \notag \\
&=&o\left( 1\right) \text{ \ \ as \ \ }n\rightarrow \infty  \notag
\end{eqnarray}%
uniformly with respect to $\left( x,y\right) \in \left[ -1+\varepsilon
,1-\varepsilon \right] ^{2}.$

Combining (\ref{II41a})-(\ref{2}) we obtain that%
\begin{equation}
II_{41}=o\left( 1\right) \text{ \ \ as \ \ }n\rightarrow \infty  \label{II41}
\end{equation}%
uniformly with respect to $\left( x,y\right) \in \left[ -1+\varepsilon
,1-\varepsilon \right] ^{2}.$

Applying the Abel's transformation we obtain%
\begin{eqnarray}
&&II_{43} \label{II431+II432} \\
&=&\int\limits_{t_{1}}^{y}\left( g\left( s_{1},t\right)
\sum\limits_{k=1}^{n-1}\int\limits_{s_{k}}^{s_{k+1}}K_{n}\left( x,s\right)
ds\right) K_{m}\left( y,t\right) dt  \notag   \\
&&+\int\limits_{t_{1}}^{y}\left( \sum\limits_{j=1}^{n-2}\left( g\left(
s_{j+1},t\right) -g\left( s_{j},t\right) \right)
\sum\limits_{k=j+1}^{n-1}\int\limits_{s_{k}}^{s_{k+1}}K_{n}\left( x,s\right)
ds\right) K_{m}\left( y,t\right) dt  \notag \\
&=&\int\limits_{t_{1}}^{y}\left( g\left( s_{1},t\right)
\int\limits_{s_{1}}^{1}K_{n}\left( x,s\right) ds\right) K_{m}\left(
y,t\right) dt  \notag \\
&&+\int\limits_{t_{1}}^{y}\left( \sum\limits_{j=1}^{n-2}\left( g\left(
s_{j+1},t\right) -g\left( s_{j},t\right) \right)
\int\limits_{s_{j+1}}^{1}K_{n}\left( x,s\right) ds\right) K_{m}\left(
y,t\right) dt  \notag \\
&=&II_{431}+II_{432}.  \notag
\end{eqnarray}

It is easy to show that%
\begin{eqnarray}
\left\vert II_{431}\right\vert &\leq &\frac{c\left( \varepsilon \right) m}{%
n\left( s_{1}-x\right) }\int\limits_{t_{1}}^{y}\left\vert f\left(
s_{1},t\right) -f\left( x,y\right) \right\vert dt  \label{II431} \\
&\leq &c\left( \varepsilon \right) m\int\limits_{t_{1}}^{y}\left\vert
f\left( s_{1},t\right) -f\left( s_{1},y\right) \right\vert dt  \notag \\
&&+c\left( \varepsilon \right) m\int\limits_{t_{1}}^{y}\left\vert f\left(
s_{1},y\right) -f\left( x,y\right) \right\vert dt  \notag \\
&\leq &c\left( \varepsilon \right) \left\{ \omega _{1}\left( f,\frac{1}{n}%
\right) +\omega _{2}\left( f,\frac{1}{m}\right) \right\} =o\left( 1\right)
\text{ \ as \ }n,m\rightarrow \infty  \notag
\end{eqnarray}%
uniformly with respect to $\left( x,y\right) \in \left[ -1+\varepsilon
,1-\varepsilon \right] ^{2}.$

From (\ref{Kn^}), (\ref{1}) and (\ref{2}) we obtain%
\begin{eqnarray}
\left\vert II_{432}\right\vert &\leq &c\left( \varepsilon \right)
\int\limits_{t_{1}}^{y}\sum\limits_{j=1}^{n-2}\frac{\left\vert g\left(
s_{j+1},t\right) -g\left( s_{j},t\right) \right\vert }{\left(
s_{j+1}-x\right) n}\left\vert K_{m}\left( y,t\right) \right\vert dt
\label{II432} \\
&\leq &c\left( \varepsilon \right) \sup\limits_{t\in \left[ t_{1},y\right]
}\sum\limits_{j=1}^{n-2}\frac{\left\vert g\left( s_{j+1},t\right) -g\left(
s_{j},t\right) \right\vert }{j}  \notag \\
&=&o\left( 1\right) \text{ \ as \ }n,m\rightarrow \infty  \notag
\end{eqnarray}%
uniformly with respect to $\left( x,y\right) \in \left[ -1+\varepsilon
,1-\varepsilon \right] ^{2}.$

From (\ref{II431+II432})-(\ref{II432}) we have%
\begin{equation}
II_{43}=o\left( 1\right) \text{ \ as \ }n,m\rightarrow \infty  \label{II43}
\end{equation}%
uniformly with respect to $\left( x,y\right) \in \left[ -1+\varepsilon
,1-\varepsilon \right] ^{2}.$

Combining (\ref{II41-II43}), (\ref{II41}), (\ref{II42}) and (\ref{II43}) we
conclude that%
\begin{equation}
II_{4}=o\left( 1\right) \text{ \ as \ }n,m\rightarrow \infty  \label{II4}
\end{equation}%
uniformly with respect to $\left( x,y\right) \in \left[ -1+\varepsilon
,1-\varepsilon \right] ^{2}.$

Analogously, we can prove that%
\begin{equation}
II_{1}=o\left( 1\right) \text{ \ as \ }n,m\rightarrow \infty  \label{II1}
\end{equation}%
uniformly with respect to $\left( x,y\right) \in \left[ -1+\varepsilon
,1-\varepsilon \right] ^{2}.$

For $II_{2}$ we can write%
\begin{eqnarray}
\left\vert II_{2}\right\vert &\leq
&\int\limits_{x}^{s_{1}}\int\limits_{t_{1}}^{y}\left\vert f\left( s,t\right)
-f\left( x,y\right) \right\vert \left\vert K_{n}\left( x,s\right)
\right\vert \left\vert K_{m}\left( y,t\right) \right\vert dsdt  \label{II2}
\\
&\leq &c\left( \varepsilon \right) \left\{ \omega _{1}\left( f,\frac{1}{n}%
\right) +\omega _{2}\left( f,\frac{1}{m}\right) \right\} =o\left( 1\right)
\text{ \ as \ }n,m\rightarrow \infty  \notag
\end{eqnarray}%
uniformly with respect to $\left( x,y\right) \in \left[ -1+\varepsilon
,1-\varepsilon \right] ^{2}.$

We can write%
\begin{eqnarray}
&&II_{3} \label{III1-III4} \\
&=&\sum\limits_{j=1}^{n-1}\sum\limits_{i=1}^{m-1}\int%
\limits_{s_{j}}^{s_{j+1}}\int\limits_{t_{i+1}}^{t_{i}}g\left( s,t\right)
K_{n}\left( x,s\right) K_{m}\left( y,t\right) dsdt  \notag  
\\
&=&\sum\limits_{j=1}^{n-1}\sum\limits_{i=1}^{m-1}\int%
\limits_{s_{j}}^{s_{j+1}}\int\limits_{t_{i+1}}^{t_{i}}\left( g\left(
s,t\right) -g\left( s_{j},t\right) -g\left( s,t_{i}\right) +g\left(
s_{j},t_{i}\right) \right)  \notag \\
&&\times K_{n}\left( x,s\right) K_{m}\left( y,t\right) dsdt  \notag \\
&&+\sum\limits_{j=1}^{n-1}\sum\limits_{i=1}^{m-1}\int%
\limits_{s_{j}}^{s_{j+1}}\int\limits_{t_{i+1}}^{t_{i}}\left( g\left(
s_{j},t\right) -g\left( s_{j},t_{i}\right) \right) K_{n}\left( x,s\right)
K_{m}\left( y,t\right) dsdt  \notag \\
&&+\sum\limits_{j=1}^{n-1}\sum\limits_{i=1}^{m-1}\int%
\limits_{s_{j}}^{s_{j+1}}\int\limits_{t_{i+1}}^{t_{i}}\left( g\left(
s,t_{i}\right) -g\left( s_{j},t_{i}\right) \right) K_{n}\left( x,s\right)
K_{m}\left( y,t\right) dsdt  \notag \\
&&+\sum\limits_{j=1}^{n-1}\sum\limits_{i=1}^{m-1}g\left( s_{j},t_{i}\right)
\int\limits_{s_{j}}^{s_{j+1}}\int\limits_{t_{i+1}}^{t_{i}}K_{n}\left(
x,s\right) K_{m}\left( y,t\right) dsdt  \notag \\
&=&III_{1}+III_{2}+III_{3}+III_{4}.  \notag
\end{eqnarray}

For $III_{3}$ we have%
\begin{eqnarray}
&&III_{3} \label{III31+III32} \\
&=&\sum\limits_{j=1}^{n-2}\sum\limits_{i=1}^{m-1}\int%
\limits_{s_{j}}^{s_{j+1}}\int\limits_{t_{i+1}}^{t_{i}}\left( g\left(
s,t_{i}\right) -g\left( s_{j},t_{i}\right) \right) K_{n}\left( x,s\right)
K_{m}\left( y,t\right) dsdt  \notag   \\
&&+\sum\limits_{i=1}^{m-1}\int\limits_{s_{n-1}}^{1}\int%
\limits_{t_{i+1}}^{t_{i}}\left( g\left( s,t_{i}\right) -g\left(
s_{n-1},t_{i}\right) \right) K_{n}\left( x,s\right) K_{m}\left( y,t\right)
dsdt  \notag \\
&=&III_{31}+III_{32}.  \notag
\end{eqnarray}

Applying the Abel's transformation we get%
\begin{eqnarray}
&&III_{31} \label{III311+III312} \\
&=&\sum\limits_{j=1}^{n-2}\sum\limits_{i=1}^{m-1}\int%
\limits_{s_{j}}^{s_{j+1}}\int\limits_{t_{i+1}}^{t_{i}}\left( f\left(
s,t_{1}\right) -f\left( s_{j},t_{1}\right) \right) K_{n}\left( x,s\right)
K_{m}\left( y,t\right) dsdt  \notag   \\
&&+\sum\limits_{j=1}^{n-2}\sum\limits_{i=1}^{m-2}\sum\limits_{k=i+1}^{m-1}%
\int\limits_{s_{j}}^{s_{j+1}}\int\limits_{t_{k+1}}^{t_{k}}\left( f\left(
s,t_{i+1}\right) -f\left( s_{j},t_{i+1}\right) \right.   \notag \\
&&\left. -f\left( s,t_{i}\right) +f\left( s_{j},t_{i}\right) \right)
K_{n}\left( x,s\right) K_{m}\left( y,t\right) dsdt  \notag \\
&=&\sum\limits_{j=1}^{n-2}\int\limits_{s_{j}}^{s_{j+1}}\int%
\limits_{-1}^{t_{1}}\left( f\left( s,t_{1}\right) -f\left(
s_{j},t_{1}\right) \right) K_{n}\left( x,s\right) K_{m}\left( y,t\right) dsdt
\notag \\
&&+\sum\limits_{j=1}^{n-2}\sum\limits_{i=1}^{m-2}\int%
\limits_{s_{j}}^{s_{j+1}}\int\limits_{-1}^{t_{i+1}}\left( f\left(
s,t_{i+1}\right) -f\left( s_{j},t_{i+1}\right) \right.   \notag \\
&&\left. -f\left( s,t_{i}\right) +f\left( s_{j},t_{i}\right) K_{n}\left(
x,s\right) \right) K_{m}\left( y,t\right) dsdt  \notag \\
&=&III_{311}+III_{312}.  \notag
\end{eqnarray}

From (\ref{Kn}), (\ref{Kn_}), (\ref{1}) and (\ref{2}) we obtain%
\begin{eqnarray}
&&\left\vert III_{311}\right\vert  \label{III311} \\
&\leq &c\left( \varepsilon \right) \left\vert
\int\limits_{-1}^{t_{1}}K_{m}\left( y,t\right) dt\right\vert
\sum\limits_{j=1}^{n-2}\int\limits_{s_{j}}^{s_{j+1}}\frac{\left\vert f\left(
s,t_{1}\right) -f\left( s_{j},t_{1}\right) \right\vert }{\left(
s_{j}-x\right) \left( 1-s_{j+1}\right) ^{1/4}\left( 1+s_{j}\right) ^{1/4}}ds
\notag \\
&\leq &\frac{c\left( \varepsilon \right) n^{5/4}}{m\left( y-t_{1}\right) }%
\sum\limits_{j=1}^{n-2}\int\limits_{s_{j}}^{s_{j+1}}\frac{\left\vert f\left(
s,t_{1}\right) -f\left( s_{j},t_{1}\right) \right\vert }{j\left( n-j\right)
^{1/4}}ds  \notag \\
&=&c\left( \varepsilon \right) n^{5/4}\int\limits_{0}^{\frac{1-x}{n}%
}\sum\limits_{j=1}^{n-2}\frac{\left\vert f\left( s+s_{j},t_{1}\right)
-f\left( s_{j},t_{1}\right) \right\vert }{j\left( n-j\right) ^{1/4}}ds
\notag \\
&=&o\left( 1\right) \text{ \ \ as \ }n,m\rightarrow \infty  \notag
\end{eqnarray}%
uniformly with respect to $\left( x,y\right) \in \left[ -1+\varepsilon
,1-\varepsilon \right] ^{2},$%
\begin{eqnarray}
&&\left\vert III_{312}\right\vert  \label{III312+} \\
&\leq &\sum\limits_{j=1}^{n-2}\sum\limits_{i=1}^{m-2}\left\vert
\int\limits_{-1}^{t_{i+1}}K_{m}\left( y,t\right) dt\right\vert
\int\limits_{s_{j}}^{s_{j+1}}\left\vert f\left( s,t_{i+1}\right) -f\left(
s_{j},t_{i+1}\right) \right.  \notag \\
&&\left. -f\left( s,t_{i}\right) +f\left( s_{j},t_{i}\right) K_{n}\left(
x,s\right) \right\vert ds  \notag \\
&\leq &c\left( \varepsilon \right)
\sum\limits_{j=1}^{n-2}\sum\limits_{i=1}^{m-2}\frac{1}{m\left(
y-t_{i+1}\right) }\int\limits_{s_{j}}^{s_{j+1}}\left\vert f\left(
s,t_{i+1}\right) -f\left( s_{j},t_{i+1}\right) \right.  \notag \\
&&\left. -f\left( s,t_{i}\right) +f\left( s_{j},t_{i}\right) \right\vert
\frac{ds}{\left( s_{j}-x\right) \left( 1-s_{j+1}\right) ^{1/4}}  \notag \\
&\leq &c\left( \varepsilon \right)
\sum\limits_{j=1}^{n-2}\sum\limits_{i=1}^{m-2}\frac{1}{i}\frac{n^{5/4}}{%
j\left( n-j\right) ^{1/4}}\int\limits_{s_{j}}^{s_{j+1}}\left\vert f\left(
s,t_{i+1}\right) -f\left( s_{j},t_{i+1}\right) \right.  \notag \\
&&\left. -f\left( s,t_{i}\right) +f\left( s_{j},t_{i}\right) \right\vert ds
\notag \\
&=&c\left( \varepsilon \right) \int\limits_{0}^{\frac{1-x}{n}%
}\sum\limits_{j=1}^{n-2}\sum\limits_{i=1}^{m-2}\frac{1}{i}\frac{n^{5/4}}{%
j\left( n-j\right) ^{1/4}}\left\vert f\left( s+s_{j},t_{i+1}\right) -f\left(
s_{j},t_{i+1}\right) \right.  \notag \\
&&\left. -f\left( s+s_{j},t_{i}\right) +f\left( s_{j},t_{i}\right)
\right\vert ds  \notag \\
&=&c\left( \varepsilon \right) \int\limits_{0}^{\frac{1-x}{n}%
}\sum\limits_{1\leq j<n/2}\sum\limits_{i=1}^{m-2}\frac{n^{5/4}}{ji\left(
n-j\right) ^{1/4}}\left\vert f\left( s+s_{j},t_{i+1}\right) -f\left(
s_{j},t_{i+1}\right) \right.  \notag \\
&&\left. -f\left( s+s_{j},t_{i}\right) +f\left( s_{j},t_{i}\right)
\right\vert ds  \notag \\
&&+c\left( \varepsilon \right) \int\limits_{0}^{\frac{1-x}{n}%
}\sum\limits_{n/2\leq j<n-1}\sum\limits_{i=1}^{m-2}\frac{n^{5/4}}{ji\left(
n-j\right) ^{1/4}}\left\vert f\left( s+s_{j},t_{i+1}\right) -f\left(
s_{j},t_{i+1}\right) \right.  \notag \\
&&\left. -f\left( s+s_{j},t_{i}\right) +f\left( s_{j},t_{i}\right)
\right\vert ds  \notag \\
&\leq &c\left( \varepsilon \right) n\int\limits_{0}^{\frac{1-x}{n}%
}\sum\limits_{1\leq j<n/2}\sum\limits_{i=1}^{m-2}\frac{1}{ji}\left\vert
f\left( s+s_{j},t_{i+1}\right) -f\left( s_{j},t_{i+1}\right) \right.  \notag
\\
&&\left. -f\left( s+s_{j},t_{i}\right) +f\left( s_{j},t_{i}\right)
\right\vert ds  \notag \\
&&+c\left( \varepsilon \right) n\int\limits_{0}^{\frac{1-x}{n}%
}\sum\limits_{n/2\leq j<n-1}\sum\limits_{i=1}^{m-2}\frac{1}{\left(
n-j\right) i}\left\vert f\left( s+s_{j},t_{i+1}\right) -f\left(
s_{j},t_{i+1}\right) \right.  \notag \\
&&\left. -f\left( s+s_{j},t_{i}\right) +f\left( s_{j},t_{i}\right)
\right\vert ds  \notag \\
&\leq &\min\limits_{1\leq k<n}\min\limits_{1\leq l<m}\left\{ \omega
_{1,2}\left( f;\frac{1}{n},\frac{1}{m}\right) \log \left( k+1\right) \log
\left( l+1\right) \right.  \notag \\
&&+\left\{ i+k\right\} \left\{ j\right\} V_{1,2}\left( f;I^{2}\right)  \notag
\\
&&\left. +\left\{ i\right\} \left\{ j+l\right\} V_{1,2}\left( f;I^{2}\right)
\right\}  \notag \\
&=&o\left( 1\right) \text{ \ \ as \ }n,m\rightarrow \infty  \notag
\end{eqnarray}%
uniformly with respect to $\left( x,y\right) \in \left[ -1+\varepsilon
,1-\varepsilon \right] ^{2}.$

Combining (\ref{III311+III312}), (\ref{III311}) and (\ref{III312+}) we have%
\begin{equation}
III_{31}=o\left( 1\right) \text{ \ \ as \ }n,m\rightarrow \infty
\label{III31}
\end{equation}%
uniformly with respect to $\left( x,y\right) \in \left[ -1+\varepsilon
,1-\varepsilon \right] ^{2}.$

Applying the Abel's transformation we obtain%
\begin{eqnarray}
&&III_{32}  \label{III321+III322} \\
&=&\sum\limits_{i=1}^{m-1}\int\limits_{s_{n-1}}^{1}\int%
\limits_{t_{i+1}}^{t_{i}}\left( f\left( s,t_{i}\right) -f\left(
s_{n-1},t_{i}\right) \right) K_{n}\left( x,s\right) K_{m}\left( y,t\right)
dsdt  \notag \\
&=&\sum\limits_{i=1}^{m-1}\int\limits_{s_{n-1}}^{1}\int%
\limits_{t_{i+1}}^{t_{i}}\left( f\left( s,t_{1}\right) -f\left(
s_{n-1},t_{1}\right) \right) K_{n}\left( x,s\right) K_{m}\left( y,t\right)
dsdt  \notag \\
&&+\sum\limits_{i=1}^{m-2}\sum\limits_{k=i+1}^{m-1}\int\limits_{s_{n-1}}^{1}%
\int\limits_{t_{k+1}}^{t_{k}}\left( f\left( s,t_{i+1}\right) -f\left(
s_{n-1},t_{i+1}\right) \right.  \notag \\
&&\left. -f\left( s,t_{i}\right) +f\left( s_{n-1},t_{i}\right) \right)
K_{n}\left( x,s\right) K_{m}\left( y,t\right) dsdt  \notag \\
&=&\int\limits_{s_{n-1}}^{1}\int\limits_{-1}^{t_{m-1}}\left( f\left(
s,t_{1}\right) -f\left( s_{n-1},t_{1}\right) \right) K_{n}\left( x,s\right)
K_{m}\left( y,t\right) dsdt  \notag \\
&&+\sum\limits_{i=1}^{m-2}\int\limits_{s_{n-1}}^{1}\int%
\limits_{-1}^{t_{i+1}}\left( f\left( s,t_{i+1}\right) -f\left(
s_{n-1},t_{i+1}\right) \right.  \notag \\
&&\left. -f\left( s,t_{i}\right) +f\left( s_{n-1},t_{i}\right) \right)
K_{n}\left( x,s\right) K_{m}\left( y,t\right) dsdt  \notag \\
&=&III_{321}+III_{322}.  \notag
\end{eqnarray}

Since%
\begin{eqnarray*}
\int\limits_{s_{n-1}}^{1}\left\vert K_{n}\left( x,s\right) \right\vert ds
&\leq &c\left( \varepsilon \right) \int\limits_{s_{n-1}}^{1}\frac{ds}{\left(
s-x\right) \left( 1-s\right) ^{1/4}} \\
&\leq &c\left( \varepsilon \right) \int\limits_{s_{n-1}}^{1}\frac{ds}{\left(
1-s\right) ^{1/4}} \\
&\leq &\frac{c\left( \varepsilon \right) }{n^{3/4}}
\end{eqnarray*}%
and%
\begin{equation*}
\int\limits_{-1}^{t_{m-1}}\left\vert K_{m}\left( y,t\right) \right\vert
dt\leq \frac{c\left( \varepsilon \right) }{m^{3/4}}
\end{equation*}%
for $III_{321}$ we can write%
\begin{equation}
\left\vert III_{321}\right\vert \leq \frac{c\left( \varepsilon \right)
\left\Vert f\right\Vert _{C}}{\left( nm\right) ^{3/4}}=o\left( 1\right)
\text{ \ \ as \ }n,m\rightarrow \infty   \label{III321}
\end{equation}%
uniformly with respect to $\left( x,y\right) \in \left[ -1+\varepsilon
,1-\varepsilon \right] ^{2}.$

On the other hand,%
\begin{eqnarray}
\left\vert III_{322}\right\vert &\leq &\frac{c\left( \varepsilon \right) }{%
n^{3/4}}\sup\limits_{s}\sum\limits_{i=1}^{m-2}\left\vert f\left(
s,t_{i+1}\right) -f\left( s,t_{i}\right) \right\vert \left\vert
\int\limits_{-1}^{t_{i+1}}K_{m}\left( y,t\right) dt\right\vert
\label{III322} \\
&\leq &\frac{c\left( \varepsilon \right) }{n^{3/4}}\sup\limits_{s}\sum%
\limits_{i=1}^{m-2}\frac{\left\vert f\left( s,t_{i+1}\right) -f\left(
s,t_{i}\right) \right\vert }{m\left( t_{i+1}-x\right) }  \notag \\
&=&\frac{c\left( \varepsilon \right) }{n^{3/4}}\sup\limits_{s}\sum%
\limits_{i=1}^{m-2}\frac{\left\vert f\left( s,t_{i+1}\right) -f\left(
s,t_{i}\right) \right\vert }{i}  \notag \\
&\leq &\frac{c\left( \varepsilon \right) }{n^{3/4}}HV_{2}\left(
f,I^{2}\right) =o(1)\text{ \ as \ }n,m\rightarrow \infty  \notag
\end{eqnarray}%
uniformly with respect to $\left( x,y\right) \in \left[ -1+\varepsilon
,1-\varepsilon \right] ^{2}.$

From (\ref{III321+III322}), (\ref{III321}) and (\ref{III322}) we have%
\begin{equation}
III_{32}=o\left( 1\right) \text{ \ \ as \ }n,m\rightarrow \infty
\label{III32}
\end{equation}%
uniformly with respect to $\left( x,y\right) \in \left[ -1+\varepsilon
,1-\varepsilon \right] ^{2}.$

Combining (\ref{III31+III32}), (\ref{III31}) and (\ref{III32}) we conclude
that%
\begin{equation}
III_{3}=o\left( 1\right) \text{ \ \ as \ }n,m\rightarrow \infty  \label{III3}
\end{equation}%
uniformly with respect to $\left( x,y\right) \in \left[ -1+\varepsilon
,1-\varepsilon \right] ^{2}.$

Analogously we can prove that%
\begin{equation}
III_{2}=o\left( 1\right) \text{ \ \ as \ }n,m\rightarrow \infty  \label{III2}
\end{equation}%
uniformly with respect to $\left( x,y\right) \in \left[ -1+\varepsilon
,1-\varepsilon \right] ^{2}.$

From (\ref{Kn}) we have%
\begin{eqnarray}
&&\left\vert III_{1}\right\vert \label{III_1}  \\
&\leq &c\left( \varepsilon \right)
\sum\limits_{j=1}^{n-1}\sum\limits_{i=1}^{m-1}\int\limits_{s_{j}}^{s_{j+1}}%
\int\limits_{t_{i+1}}^{t_{i}}\left\vert f\left( s,t\right) -f\left(
s_{j},t\right) -f\left( s,t_{i}\right) +f\left( s_{j},t_{i}\right)
\right\vert \notag \\
&&\times \frac{1}{s-x}\frac{1}{y-t}\frac{1}{\left( 1-s\right) ^{1/4}}\frac{1%
}{\left( 1+t\right) ^{1/4}}dsdt \notag \\
&\leq &\left( nm\right) ^{5/4}\int\limits_{0}^{\frac{1-x}{n}%
}\int\limits_{0}^{\frac{1-y}{m}}\sum\limits_{j=1}^{n-1}\sum%
\limits_{i=1}^{m-1}\frac{1}{j\left( n-j\right) ^{1/4}}\frac{1}{i\left(
m-i\right) ^{1/4}} \notag \\
&&\times \left\vert f\left( s+s_{j},t+t_{i}\right) -f\left(
s_{j},t+t_{i}\right) -f\left( s+s_{j},t_{i}\right) +f\left(
s_{j},t_{i}\right) \right\vert dsdt.\notag
\end{eqnarray}%
We can write%
\begin{eqnarray}
&&\sqrt[4]{nm}\sum\limits_{j=1}^{n-1}\sum\limits_{i=1}^{m-1}\frac{1}{j\left(
n-j\right) ^{1/4}}\frac{1}{i\left( m-i\right) ^{1/4}}  \label{IV1-IV4} \\
&&\times \left\vert f\left( s+s_{j},t+t_{i}\right) -f\left(
s_{j},t+t_{i}\right) -f\left( s+s_{j},t_{i}\right) +f\left(
s_{j},t_{i}\right) \right\vert   \notag \\
&\leq &\sum\limits_{1\leq j<n/2}\sum\limits_{1\leq i<m/2}\frac{1}{ji}  \notag
\\
&&\times \left\vert f\left( s+s_{j},t+t_{i}\right) -f\left(
s_{j},t+t_{i}\right) -f\left( s+s_{j},t_{i}\right) +f\left(
s_{j},t_{i}\right) \right\vert   \notag \\
&&+\frac{1}{m^{3/4}}\sum\limits_{1\leq j<n/2}\sum\limits_{m/2\leq i<m-1}%
\frac{1}{j\left( m-i\right) ^{1/4}}  \notag \\
&&\times \left\vert f\left( s+s_{j},t+t_{i}\right) -f\left(
s_{j},t+t_{i}\right) -f\left( s+s_{j},t_{i}\right) +f\left(
s_{j},t_{i}\right) \right\vert   \notag \\
&&+\frac{1}{n^{3/4}}\sum\limits_{n/2\leq j<n-1}\sum\limits_{1\leq i<m/2}%
\frac{1}{\left( n-j\right) ^{1/4}i}  \notag \\
&&\times \left\vert f\left( s+s_{j},t+t_{i}\right) -f\left(
s_{j},t+t_{i}\right) -f\left( s+s_{j},t_{i}\right) +f\left(
s_{j},t_{i}\right) \right\vert   \notag \\
&&+\frac{1}{\left( nm\right) ^{3/4}}\sum\limits_{n/2\leq
j<n-1}\sum\limits_{m/2\leq i<m-1}\frac{1}{\left( n-j\right) ^{1/4}\left(
m-j\right) ^{1/4}} \notag \\
&&\times \left\vert f\left( s+s_{j},t+t_{i}\right) -f\left(
s_{j},t+t_{i}\right) -f\left( s+s_{j},t_{i}\right) +f\left(
s_{j},t_{i}\right) \right\vert \notag \\
&\leq &\sum\limits_{1\leq j<n/2}\sum\limits_{1\leq i<m/2}\frac{1}{ji}  \notag
\\
&&\times \left\vert f\left( s+s_{j},t+t_{i}\right) -f\left(
s_{j},t+t_{i}\right) -f\left( s+s_{j},t_{i}\right) +f\left(
s_{j},t_{i}\right) \right\vert   \notag \\
&&+\sum\limits_{1\leq j<n/2}\sum\limits_{m/2\leq i<m-1}\frac{1}{j\left(
m-i\right) }  \notag \\
&&\times \left\vert f\left( s+s_{j},t+t_{i}\right) -f\left(
s_{j},t+t_{i}\right) -f\left( s+s_{j},t_{i}\right) +f\left(
s_{j},t_{i}\right) \right\vert   \notag \\
&&+\sum\limits_{n/2\leq j<n-1}\sum\limits_{1\leq i<m/2}\frac{1}{\left(
n-j\right) i}  \notag \\
&&\times \left\vert f\left( s+s_{j},t+t_{i}\right) -f\left(
s_{j},t+t_{i}\right) -f\left( s+s_{j},t_{i}\right) +f\left(
s_{j},t_{i}\right) \right\vert   \notag \\
&&+\sum\limits_{n/2\leq j<n-1}\sum\limits_{m/2\leq i<m-1}\frac{1}{\left(
n-j\right) \left( m-j\right) }\notag \\
&&\times \left\vert f\left( s+s_{j},t+t_{i}\right) -f\left(
s_{j},t+t_{i}\right) -f\left( s+s_{j},t_{i}\right) +f\left(
s_{j},t_{i}\right) \right\vert  \notag\\
&=&IV_{1}+IV_{2}+IV_{3}+IV_{4}.  \notag
\end{eqnarray}

It is easy to show that%
\begin{eqnarray}
&&IV_{1}  \label{IV1} 
\leq \min\limits_{1\leq l<n}\min\limits_{1\leq r<m}\left\{ \omega
_{12}\left( f;\frac{1}{n},\frac{1}{m}\right) \log \left( l+1\right) \log
\left( r+1\right) \right.  \\
&&\left. +\left\{ i+l\right\} \left\{ j\right\} V_{1,2}\left( f;I^{2}\right)
+\left\{ i\right\} \left\{ j+r\right\} V_{1,2}\left( f;I^{2}\right) \right\}
\notag \\
&=&o\left( 1\right) \text{ \ \ as \ }n,m\rightarrow \infty  \notag
\end{eqnarray}%
uniformly with respect to $\left( x,y\right) \in \left[ -1+\varepsilon
,1-\varepsilon \right] ^{2}.$

Analogously, we can prove that%
\begin{equation}
IV_{i}=o\left( 1\right) \text{ \ \ as \ }n,m\rightarrow \infty ,i=2,3,4
\label{IV2-IV4}
\end{equation}%
uniformly with respect to $\left( x,y\right) \in \left[ -1+\varepsilon
,1-\varepsilon \right] ^{2}.$

Combining (\ref{III_1}), (\ref{IV1-IV4}), (\ref{IV1}) and (\ref{IV2-IV4}) we get%
\begin{equation}
III_{1}=o\left( 1\right) \text{ \ \ as \ }n,m\rightarrow \infty  \label{III1}
\end{equation}%
uniformly with respect to $\left( x,y\right) \in \left[ -1+\varepsilon
,1-\varepsilon \right] ^{2}.$

Finally, we estimate $III_{4}$. By Abel's transformation we have%
\begin{eqnarray}
&&III_{4}  \label{III41-III44} \\
&=&\sum\limits_{j=1}^{n-2}\sum\limits_{i=1}^{m-2}\left( f\left(
s_{j},t_{i}\right) -f\left( s_{j+1},t_{i}\right) -f\left(
s_{j},t_{i+1}\right) +f\left( s_{j+1},t_{i+1}\right) \right)  \notag \\
&&\times
\sum\limits_{k=j+1}^{n-1}\sum\limits_{l=i+1}^{m-1}\int%
\limits_{s_{k}}^{s_{k+1}}\int\limits_{t_{l+1}}^{t_{l}}K_{n}\left( x,s\right)
K_{m}\left( y,t\right) dsdt  \notag \\
&&+\sum\limits_{i=1}^{m-2}\left( f\left( s_{1},t_{i+1}\right) -f\left(
s_{1},t_{i}\right) \right)
\sum\limits_{l=i+1}^{m-1}\int\limits_{s_{0}}^{s_{1}}\int%
\limits_{t_{l+1}}^{t_{l}}K_{n}\left( x,s\right) K_{m}\left( y,t\right) dsdt
\notag \\
&&+\sum\limits_{j=1}^{n-2}\left( f\left( s_{j+1},t_{1}\right) -f\left(
s_{j},t_{1}\right) \right)
\sum\limits_{k=j+1}^{n-1}\int\limits_{s_{k}}^{s_{k+1}}\int%
\limits_{t_{1}}^{t_{0}}K_{n}\left( x,s\right) K_{m}\left( y,t\right) dsdt
\notag \\
&&+g\left( s_{1},t_{1}\right)
\sum\limits_{j=1}^{n-1}\sum\limits_{i=1}^{m-1}\int\limits_{s_{j}}^{s_{j+1}}%
\int\limits_{t_{i+1}}^{t_{i}}K_{n}\left( x,s\right) K_{m}\left( y,t\right)
dsdt  \notag \\
&=&III_{41}+III_{42}+III_{43}+III_{44}.  \notag
\end{eqnarray}

From (\ref{Kn_}), (\ref{Kn^}), (\ref{1}), (\ref{2}) and (\ref{IV1-IV4}) we
have%
\begin{eqnarray}
&&\left\vert III_{41}\right\vert  \label{III41} \\
&\leq &\sum\limits_{j=1}^{n-2}\sum\limits_{i=1}^{m-2}\left\vert f\left(
s_{j},t_{i}\right) -f\left( s_{j+1},t_{i}\right) -f\left(
s_{j},t_{i+1}\right) +f\left( s_{j+1},t_{i+1}\right) \right\vert  \notag \\
&&\times \left\vert
\int\limits_{s_{j+1}}^{1}\int\limits_{-1}^{t_{i}}K_{n}\left( x,s\right)
K_{m}\left( y,t\right) dsdt\right\vert  \notag \\
&\leq &\sum\limits_{j=1}^{n-2}\sum\limits_{i=1}^{m-2}\left\vert f\left(
s_{j},t_{i}\right) -f\left( s_{j+1},t_{i}\right) -f\left(
s_{j},t_{i+1}\right) +f\left( s_{j+1},t_{i+1}\right) \right\vert  \notag \\
&&\times \frac{1}{n\left( s_{j+1}-x\right) }\frac{1}{m\left( y-t_{i}\right) }
\notag \\
&\leq &\sum\limits_{j=1}^{n-2}\sum\limits_{i=1}^{m-2}\frac{\left\vert
f\left( s_{j},t_{i}\right) -f\left( s_{j+1},t_{i}\right) -f\left(
s_{j},t_{i+1}\right) +f\left( s_{j+1},t_{i+1}\right) \right\vert }{ij}
\notag \\
&=&o\left( 1\right) \text{ \ \ as \ }n,m\rightarrow \infty  \notag
\end{eqnarray}%
uniformly with respect to $\left( x,y\right) \in \left[ -1+\varepsilon
,1-\varepsilon \right] ^{2}.$

\begin{eqnarray}
\left\vert III_{42}\right\vert  &\leq &\sum\limits_{i=1}^{m-2}\left\vert
f\left( s_{1},t_{i+1}\right) -f\left( s_{1},t_{i}\right) \right\vert
\label{III42} \\
&&\times \int\limits_{x}^{x+\frac{1-x}{n}}\left\vert K_{n}\left( x,s\right)
\right\vert ds\left\vert \int\limits_{-1}^{t_{i+1}}K_{m}\left( y,t\right)
dt\right\vert   \notag \\
&\leq &c\left( \varepsilon \right) \sum\limits_{i=1}^{m-2}\frac{\left\vert
f\left( s_{1},t_{i+1}\right) -f\left( s_{1},t_{i}\right) \right\vert }{i}
\notag \\
&=&o\left( 1\right) \text{ \ \ as \ }n,m\rightarrow \infty   \notag
\end{eqnarray}%
uniformly with respect to $\left( x,y\right) \in \left[ -1+\varepsilon
,1-\varepsilon \right] ^{2}.$

Analogously, we can prove that%
\begin{equation}
III_{43}=o\left( 1\right) \text{ \ \ as \ }n,m\rightarrow \infty
\label{III43}
\end{equation}%
uniformly with respect to $\left( x,y\right) \in \left[ -1+\varepsilon
,1-\varepsilon \right] ^{2},$%
\begin{eqnarray}
\left\vert III_{44}\right\vert &\leq &\left\vert f\left( s_{1},t_{1}\right)
-f\left( x,y\right) \right\vert \left\vert
\int\limits_{s_{1}}^{1}K_{m}\left( y,t\right) dt\right\vert \left\vert
\int\limits_{-1}^{t_{1}}K_{n}\left( x,s\right) ds\right\vert  \label{III44}
\\
&\leq &c\left( \varepsilon \right) \left( \omega _{1}\left( f;\frac{1}{n}%
\right) +\omega _{2}\left( f;\frac{1}{m}\right) \right)  \notag \\
&=&o\left( 1\right) \text{ \ \ as \ }n,m\rightarrow \infty  \notag
\end{eqnarray}%
uniformly with respect to $\left( x,y\right) \in \left[ -1+\varepsilon
,1-\varepsilon \right] ^{2}.$

From (\ref{III41-III44})-(\ref{III43}) we have%
\begin{equation}
III_{4}=o\left( 1\right) \text{ \ \ as \ }n,m\rightarrow \infty  \label{III4}
\end{equation}%
uniformly with respect to $\left( x,y\right) \in \left[ -1+\varepsilon
,1-\varepsilon \right] ^{2}.$

By (\ref{III1-III4}), (\ref{III3}), (\ref{III2}), (\ref{III1}) and (\ref%
{III4}) we obtain%
\begin{equation}
II_{3}=o\left( 1\right) \text{ \ \ as \ }n,m\rightarrow \infty  \label{II3}
\end{equation}%
uniformly with respect to $\left( x,y\right) \in \left[ -1+\varepsilon
,1-\varepsilon \right] ^{2}.$

From (\ref{II1-II4}), (\ref{II4}), (\ref{II1}), (\ref{II2}) and (\ref{II3})
we conclude that%
\begin{equation}
I_{1}=o\left( 1\right) \text{ \ \ as \ }n,m\rightarrow \infty  \label{I1}
\end{equation}%
uniformly with respect to $\left( x,y\right) \in \left[ -1+\varepsilon
,1-\varepsilon \right] ^{2}.$

Analogously we can prove that%
\begin{equation}
I_{i}=o\left( 1\right) \text{ \ \ as \ }n,m\rightarrow \infty ,i=2,3,4
\label{I2-I4}
\end{equation}%
uniformly with respect to $\left( x,y\right) \in \left[ -1+\varepsilon
,1-\varepsilon \right] ^{2}.$

Combining (\ref{I1-I4}), (\ref{I1}) and (\ref{I2-I4}) we complete the proof
of Theorem \ref{CBV}.
\end{proof}

\end{document}